\let\@wraptoccontribs\wraptoccontribs
\newcommand{\ssigma}{{\boldsymbol{\Sigma}}}
\newcommand{\Exp}{\boldsymbol{\operatorname{Exp}}}
\newcommand{\pr}{\operatorname{Pr}}
\newcommand{\stab}{{\operatorname{stable}}}
\newcommand{\rc}{{\operatorname{rc}}}
\newcommand{\Span}{\operatorname{Span}}
\newcommand{\stratum}[2][{g}]{\beta_{#1}({#2})}
\newcommand{\topd}{\operatorname{top}}
\newcommand{\psigma}[1]{P_{\sigma_{#1}}(t)}
\newcommand{\CC}{{\mathbb{C}}}
\newcommand{\HH}{{\mathbb{H}}}
\newcommand{\EE}{{\mathbb{E}}}
\newcommand{\QQ}{{\mathbb{Q}}}
\newcommand{\RR}{{\mathbb{R}}}
\newcommand{\TT}{{\mathbb{T}}}
\newcommand{\ZZ}{{\mathbb{Z}}}
\newcommand{\NN}{{\mathbb{N}}}
\newcommand{\calT}{{\mathcal T}}
\newcommand{\calA}{{\mathcal A}}
\newcommand{\calX}{{\mathcal X}}
\newcommand{\op}{\operatorname}
\newcommand{\ab}[1][g]{\calA_{#1}}
\newcommand{\ua}[1][g]{\calX_{#1}}
\newcommand{\Sat}[1][g]{{\calA_{#1}^{\op {Sat}}}}
\newcommand{\Vor}[1][g]{{\calA_{#1}^{\op {Vor}}}}
\newcommand{\Perf}[1][g]{{\calA_{#1}^{\op {Perf}}}}
\newcommand{\Centr}[1][g]{{\calA_{#1}^{\op {Centr}}}}
\newcommand{\Matr}[1][g]{{\calA_{#1}^{\op {Matr}}}}
\newcommand{\Std}[1][g]{{\calA_{#1}^{\op {Std}}}}
\newcommand{\Asigma}[1][g]{{{\calA}_{#1}^\ssigma}}
\newcommand{\Sp}{\op{Sp}}
\newcommand{\GL}{\op{GL}}
\newcommand{\Sym}{\op{Sym}}
\newcommand{\Aut}{\op{Aut}}
\newcommand\codim{{\rm codim}}
\newcommand\rank{\operatorname{rank}}
\newcommand{\pu}{\bullet}
\newcommand{\cohloc}[3]{H^{#1}(#2,#3)}
\newcommand{\coh}[2][\pu]{\cohloc {#1}{#2}{\QQ}}
\newcommand{\cohstab}[2][\pu]{H^{#1}_{\stab}({#2},\QQ)}
\newcommand{\Ht}[1][\pu]{H^{{\rm top}-#1}}
\newcommand{\BMloc}[3]{\bar H_{#1}(#2,#3)}
\newcommand{\BM}[2][\pu]{\BMloc {#1}{#2}{\QQ}}
\newcommand{\BMt}[2][\pu]{\BMloc {\topd-{#1}}{#2}{\QQ}}
\theoremstyle{plain}
\newtheorem{thm}{Theorem} 
\newtheorem{lm}[thm]{Lemma}
\newtheorem{prop}[thm]{Proposition}
\newtheorem{cor}[thm]{Corollary}
\theoremstyle{definition}
\newtheorem{df}[thm]{Definition}
\newtheorem{rem}[thm]{Remark}
\title[Stable Betti numbers of toroidal compactifications]{Stable Betti numbers of (partial) toroidal compactifications of the moduli space of abelian varieties}
\author{Samuel Grushevsky}
\address{Mathematics Department, Stony Brook University,
Stony Brook, NY 11790-3651, USA}
\email{sam@math.stonybrook.edu}
\thanks{Research of the first author is supported in part by National Science Foundation under the grant DMS-15-01265.}
\author{Klaus Hulek}
\address{Institut f\"ur Algebraische Geometrie, Leibniz Universit\"at Hannover, Welfengarten 1, 30060 Hannover, Germany}
\email{hulek@math.uni-hannover.de}
\author{Orsola Tommasi}
\address{Mathematical Sciences, Chalmers University of Technology and the University of Gothenburg, SE-412 96 G\"oteborg, Sweden}
\email{orsola@chalmers.se}
\address{Rudjer Boskovi\'c Institute, Bijenicka 54, 10000 Zagreb, Croatia}
\email{mathieu.dutour@gmail.com}
\dedicatory{Dedicated to Nigel Hitchin on the occasion of his 70th birthday}
\begin{document}
\begin{abstract}
We present an algorithm for explicitly computing the number of generators of the stable cohomology algebra
of any rationally smooth
partial toroidal compactification of $\ab$ satisfying certain additivity and finiteness properties, in terms of the combinatorics of the corresponding toric fans. In particular the algorithm determines the stable cohomology of the matroidal partial compactification $\Matr$, in terms of simple regular matroids that are irreducible with respect to the 1-sum operation, and their automorphism groups.
The algorithm also applies to compute the stable  Betti numbers in close to top degree for the perfect cone toroidal compactification $\Perf$. This suggests the existence of an algebra structure on $\cohstab[\topd-k]{\Perf}$.
\end{abstract}

\maketitle

\section*{Introduction}

We work over $\CC$, and denote by $\ab$ the moduli space of complex principally polarized abelian varieties of dimension $g$. The classical result of Borel \cite{borel1} states that the cohomology of $\ab$ stabilizes: this is to say, $H^k(\ab,\QQ)$ is independent of $g$ for $g>k$. Moreover, in this range the cohomology is freely generated by the odd degree Hodge classes
$\lambda_{2k+1}:=c_{2k+1}(\EE)$, where $\EE$ denotes the Hodge bundle --- the complex rank $g$ vector bundle over $\ab$ whose fiber over $A$ is $H^{1,0}(A,\CC)$. We think of this result as computing the cohomology of $\ab[\infty]$ (in the sense of stability with respect to a sequence of stabilization maps, as discussed in Section~\ref{sec:stable}), and denote
$$
  R:=H^\pu(\ab[\infty],\QQ)=\QQ[\lambda_1,\lambda_3,\dots]
$$
this free polynomial algebra.

The moduli space $\ab$ admits various compactifications. Charney and Lee \cite{chle} proved that the cohomology of the Satake--Baily--Borel compactification $\Sat$ also stabilizes, i.e. that~$H^k(\Sat,\QQ)$ is independent of $g$ for $g>k$.
They proved, in particular, that the classes $\lambda_i$ can be extended (non-canonically) to $\Sat$.
Charney and Lee  proved that, as an $R$-algebra, the stable cohomology $H^\pu(\Sat[\infty],\QQ)$ is generated by certain classes $\alpha_3,\alpha_5,\dots$. Chen and Looijenga proved in \cite{chlo} that these classes have Hodge weight zero, and thus are non-algebraic. Thus the stable cohomology of $\Sat$ contains non-trivial Tate extensions, and we refer the reader to the recent preprint \cite{lo-pardon} for a discussion of these extensions.

In~\cite{grhuto} we investigated the stability of cohomology of toroidal compactifications or partial toroidal compactifications of $\ab$. The methods we used were different from the topological methods used by Borel, and Charney and Lee, and the results we obtained in~\cite{grhuto} were on stabilization in close to top degree.
It is easy to see that $\Ht[k](\Sat,\QQ)$ is independent of $g$ for $g>k$ (where here and below, $\operatorname{top}$ denotes the real dimension of the space, so in this case $g(g+1)$), and is freely generated by duals of the extensions of the odd Hodge classes. In \cite{grhuto}, we showed that cohomology of the perfect cone toroidal compactification $\Ht[k](\Perf,\QQ)$ is also independent of $g$ for $g>k$ --- we will thus say that its stabilizes in {\em codegree} up to $g$.

A primary goal of the current article is to provide an algorithm for computing such stable cohomology, in the more general setup of suitable partial toroidal compactifications.
As already observed in \cite{grhuto}, in the context of stabilization in small codegree, it is more natural to work with homology rather than with cohomology.
For spaces that are not rationally smooth, we typically work with Borel--Moore homology, i.e. homology with closed support (see \cite[Ch.~19]{fultonintersection}).
Specifically, our main result  is an algorithm that applies to Borel--Moore homology (or, dually, to cohomology with compact support) $\bar H_{\topd-k}(\Asigma)$ in codegree $k<g$ for any (admissible) small additive collection $\Asigma$ of partial toroidal compactifications of $\ab$.
These terms will be defined in detail in Section~\ref{sec:stable}, but essentially
{\em admissible} is the condition ensuring the existence of stabilization maps, while
{\em small} means that the dimension of each cone in the fan $\Sigma_g$, defining the partial toroidal compactification, is at least $\frac r2+1$, where $r$ is the {\em rank} of the cone (see Definition~\ref{def:rank} for the precise definition of rank).
Geometrically, this means that, if the stratum $\stratum\sigma\subset\Asigma$ corresponding to a cone $\sigma\in\Sigma_g$ maps to $\ab[g-k]\subset\Sat$, then its codimension in $\Asigma$ is at least equal to $\frac k2+1$.

{\em Additivity} is a property ensuring that all product maps $\ab[g_1]\times\ab[g_2]\to\ab[g_1+g_2]$
extend to product maps
\begin{equation}\label{eq:prod}
  \pr^\ssigma:\Asigma[g_1]\times\Asigma[g_2]\to\Asigma[g_1+g_2].
\end{equation}
Although this is a stronger property than admissibility, all known admissible collections of toroidal compactifications $\Asigma$ are also additive.

To be able to speak about the stable cohomology in a meaningful way, one needs to have the stabilization map relating cohomology for $g$ and $g+1$. The standard choice is to consider a stabilization map extending the map $\ab\rightarrow\ab[g+1]$ defined by taking products with a fixed elliptic curve. Provided the cone decompositions $\Sigma_g$ and $\Sigma_{g+1}$ satisfy some natural compatibility conditions, such an extension exists and defines the desired stabilization maps for the cohomology in a fixed degree.
If the stabilization map induces an isomorphism in degree $k$ for $g\gg k$, we say that the cohomology of $\Asigma$ stabilizes and we call the limit object the stable cohomology of $\Asigma$. If the compactification $\Asigma$ is additive, stable cohomology will have a Hopf algebra structure, where product and coproduct are induced by the usual cup product on $H^\pu(\Asigma)$ and the pull-back of $\Pr^\ssigma$ in the stable range, respectively. In this situation stable cohomology groups are always finite-dimensional, hence by Hopf's theorem stable cohomology must be isomorphic to a free graded-commutative algebra.

In our approach, we want to consider maps between cohomology (or homology) groups of fixed \emph{codegree}. Thus, we would like to work with maps that are Poincar\'e dual to the stabilization maps. Doing this presents an extra challenge, as the partial toroidal compactifications $\Asigma$ we consider are not necessarily rationally smooth. We circumvent this problem by proving in Section~\ref{sec:transverse} that the product maps $\pr^\ssigma$ are transverse embeddings (after passing to a level cover). This enables us to define Gysin maps in Borel--Moore homology of a fixed codegree.

The duals of the stabilization maps in Borel--Moore homology give the maps in cohomology with compact support of fixed codegree that were used in \cite{grhuto} for the perfect cone compactification. The reason why we prefer to work with Borel--Moore homology --- which we denote by $\bar H_\pu$ --- is that this is where the cycle maps naturally takes values. Namely, by \cite[Ch.~19]{fultonintersection}, for each complex scheme $X$ there is a well-defined cycle map $A_\pu(X)\rightarrow \bar H_\pu(X)$; we will call a Borel--Moore homology class {\em algebraic} if it lies in the image of the cycle map.
A further advantage of Borel--Moore homology is that the cap product gives $\BMt{\Asigma}$ an $\coh{\Asigma}$-module structure, and hence an $R$-module structure in the stable range.

In this language, the main result of~\cite{grhuto} combined with the transversality statement in Proposition~\ref{prop:transversal} gives the stabilization of Borel--Moore homology as follows.
\begin{thm}\label{thm:stable}
If $\ssigma=\lbrace\Sigma_g\rbrace_{g\ge 0}$ is a small admissible collection of fans, then there is an isomorphism
$$
  \BM[\topd-k]{\Asigma[\infty]}\cong  \BM[g(g+1)-k]{\Asigma}
$$
of Borel--Moore homology for all $k<g$. Moreover, in  this range this homology consists only of algebraic classes.
\end{thm}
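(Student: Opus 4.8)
The plan is to deduce the statement by combining the main stabilization result of~\cite{grhuto} with the transversality of Proposition~\ref{prop:transversal}; the role of the latter is to upgrade the stabilization maps to Gysin maps of a fixed codegree, so that the argument can be carried out in Borel--Moore homology of the (in general singular, and for partial compactifications non-compact) spaces $\Asigma$. Concretely, I would first fix a level $\ell\geq 3$, so that $\Asigma=\Asigma(\ell)/G_\ell$ with $\Asigma(\ell)$ a variety and $G_\ell=\Sp(2g,\ZZ/\ell\ZZ)$; since we work with $\QQ$-coefficients, $\BM{\Asigma}=\BM{\Asigma(\ell)}^{G_\ell}$, so it suffices to argue on the $\Asigma(\ell)$ equivariantly. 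Fixing an elliptic curve $E_0$ and restricting the product map $\pr^\ssigma$ of~\eqref{eq:prod} to $\{E_0\}\times\Asigma[g]$ produces the stabilization embedding $\iota_g\colon\Asigma[g]\hookrightarrow\Asigma[g+1]$, of complex codimension $g+1$; by Proposition~\ref{prop:transversal}, after passing to level covers it is a transverse closed embedding, and hence (as in Section~\ref{sec:transverse}) carries a refined Gysin homomorphism
$$
  \iota_g^{!}\colon\BMloc{j}{\Asigma[g+1](\ell)}{\QQ}\longrightarrow\BMloc{j-2(g+1)}{\Asigma[g](\ell)}{\QQ}.
$$
Since $(g+1)(g+2)-2(g+1)=g(g+1)$, taking $j=(g+1)(g+2)-k$ shows that $\iota_g^{!}$ preserves codegree; it is $G_\ell$-equivariant, the level structures in an admissible collection being chosen compatibly, so on $G_\ell$-invariants it descends to a stabilization map $\BM[(g+1)(g+2)-k]{\Asigma[g+1]}\to\BM[g(g+1)-k]{\Asigma}$ of codegree $k$.

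Next I would invoke~\cite{grhuto}. Under the duality between Borel--Moore homology and compactly supported cohomology, these maps $\iota_g^{!}$ are dual to the stabilization maps in compactly supported cohomology of fixed codegree used in~\cite{grhuto}; hence the main result of~\cite{grhuto}, which applies to every small admissible collection, says precisely that $\iota_g^{!}$ is an isomorphism as soon as $g>k$ (for the perfect cone fans this is the stabilization in codegree recalled in the introduction). The stable Borel--Moore homology $\BM[\topd-k]{\Asigma[\infty]}$ is by definition (Section~\ref{sec:stable}) the inverse limit of this system, which, the transition maps being isomorphisms for $g>k$, equals $\BM[g(g+1)-k]{\Asigma}$ for every such $g$; this is the asserted identification.

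For the algebraicity I would use the stratification of $\Asigma$ by the locally closed strata $\stratum\sigma$, $\sigma\in\Sigma_g$, and the associated weight spectral sequence converging to $\BM{\Asigma}$. The key point is that smallness confines the strata contributing to codegree $k<g$ to those lying over $\ab[g']\subset\Sat$ with $g'>g-k$: such a stratum has codimension at least $\frac{g-g'}{2}+1$ in $\Asigma$, whereas only strata of codimension $\leq\frac k2$ can contribute to Borel--Moore homology of codegree $k$. For such large $g'$ the relevant cohomology of the base $\ab[g']$ is already in the stable range, hence by Borel a polynomial in the classes $\lambda_{2i+1}$; combined with the local analysis of the toroidal boundary strata carried out in~\cite{grhuto}, which identifies the contributions of the torus directions with algebraic classes, this shows that $\BM[g(g+1)-k]{\Asigma}$ consists only of algebraic classes for $k<g$. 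Alternatively, one may simply quote the corresponding assertion of~\cite{grhuto} and transport it along the isomorphisms of the previous paragraph, using that the $\iota_g^{!}$ respect cycle maps.

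I expect the main obstacle to lie in the first paragraph: checking that Proposition~\ref{prop:transversal} genuinely produces well-defined Gysin maps of the correct codegree on these singular, possibly non-compact spaces, and that under Borel--Moore/compactly supported duality these coincide with the stabilization maps of~\cite{grhuto}. Once this bridge is in place, the stabilization itself is a direct appeal to~\cite{grhuto}, while the algebraicity rests on the (more delicate) local analysis of the toroidal boundary strata, also available from~\cite{grhuto}.
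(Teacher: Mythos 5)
There is a genuine gap in your second paragraph, which is where the whole weight of the theorem rests. You assert that ``the main result of~\cite{grhuto} \dots applies to every small admissible collection'' and conclude that the Gysin maps $\iota_g^{!}$ are isomorphisms for $g>k$. But the stabilization proof in~\cite{grhuto} is written for the perfect cone fan and, as Remark~\ref{rem:small2} points out, it uses in a crucial way the stronger property that every cone satisfies $\dim\sigma\geq\rank\sigma$ (equivalently, that codimension-$p$ strata occur over $\ab[g-p]$). A general small admissible collection only satisfies $\dim\sigma\geq\frac{\rank\sigma}{2}+1$, so the citation is not available off the shelf: extending the stabilization to this weaker hypothesis is precisely the content of the theorem, and it is what the paper proves from scratch (as Theorem~\ref{thm:oldmain}). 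The argument there runs the Gysin spectral sequence for the stratification of $\Asigma$ by the rank-$p$ loci $\beta^0_{p,g}$, with $E^1_{-p,-q}=\BM[g(g+1)-p-q]{\beta^0_{p,g}}$, uses the stabilization and algebraicity of the Borel--Moore homology of the individual strata (from~\cite{grhuto}), and then uses smallness --- in the form $2\gamma_p-p-2\geq 0$, where $\gamma_p$ is the minimal dimension of a rank-$p$ cone --- to check that for $p+q<g$ both the source and target of every differential $d^r$ lie in the stable range, where the terms are algebraic and hence vanish in odd total degree. The resulting parity argument kills all differentials, the sequence degenerates at $E^1$ in codegree $<g$, and stabilization of $\BM[g(g+1)-k]{\Asigma}$ follows from stabilization of the strata.

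This also shows that your treatment of algebraicity as a separate ``moreover'' clause inverts the logic of the actual proof: the algebraicity (hence even-degree concentration) of the $E^1$ terms is the engine that forces the degeneration, without which the spectral sequence does not yield the stabilization isomorphism at all. Your third paragraph does contain the right ingredients --- the stratification, the codimension bound $\dim\sigma\leq k/2$ for strata contributing in codegree $k$, and the appeal to the stable description of the strata from~\cite{grhuto} --- but they need to be deployed to prove the isomorphism, not merely to transport algebraicity along it. (Your first paragraph, setting up the codegree-preserving Gysin maps via Proposition~\ref{prop:transversal} and descent from a neat level cover, is fine and matches the paper's construction of the maps~\eqref{eq:stablemap}.)
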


For additive collections, the existence of transverse product maps ensures that stable homology carries a coalgebra structure -- and, if $\Asigma $ is rationally smooth, a Hopf algebra structure isomorphic to that of a polynomial algebra with generators of even degree.
Our goal is to determine the number of generators of the algebra in each degree, and our main result is the following:
\begin{thm}\label{thm:algorithm}
If $\ssigma=\lbrace\Sigma_g\rbrace_{g\ge 0}$ is a small additive collection of fans,  then the stable Borel--Moore homology $\BM[\topd-\pu]{\Asigma[\infty]}$ is isomorphic to the free $R$-module generated by the symmetric algebra $\Sym^\pu(V^\ssigma)$ of the graded $\QQ$-vector space $V^\ssigma$ which is trivial in odd degree and given in even degree by
$$
  V^\ssigma_{2k}:=\bigoplus_{\substack{[\sigma]\in[\ssigma]\\\sigma \text{ irreducible }}}
(\Sym^{k-\dim\sigma} V_\sigma)^{G_\sigma},
 $$
where
\begin{itemize}
\item $[\ssigma]:=\underset{g}{\underrightarrow{\lim}}\;\Sigma_g/\GL(g,\ZZ)$ is the set of all orbits of cones in $\ssigma$;
\item $\sigma$ is called irreducible if it is not equal to a direct sum of two  non-zero cones of $\ssigma$;
\item $V_\sigma$ denotes the $\QQ$-span of $\sigma$; $G_\sigma$ denotes the refined automorphism group of $\sigma$ (see Definition~\ref{d:span,auto}), and we are thus summing the invariant subspaces of $\Sym^\pu V_\sigma$ under the action of $G_\sigma$.
\end{itemize}
\end{thm}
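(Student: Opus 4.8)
The plan is to compute the stable Borel--Moore homology by a stratification argument, using the torus-orbit stratification of $\Asigma$ induced by the fan $\Sigma_g$, and then to identify the resulting answer with the symmetric-algebra expression in the statement. First I would recall from Theorem~\ref{thm:stable} that for $k<g$ all of $\BM[\topd-k]{\Asigma}$ is algebraic, so it suffices to understand the Chow groups $A_{\topd-k}(\Asigma)$ in this range, or equivalently the group generated by the closures $\overline{\stratum\sigma}$ of the strata together with the intersection-theoretic relations among them. Each stratum $\stratum\sigma$ corresponding to an orbit $[\sigma]\in\Sigma_g/\GL(g,\ZZ)$ is, up to a finite group, a torus bundle over (a finite cover of) $\ab[g-\operatorname{rk}\sigma]$; the smallness hypothesis guarantees that a stratum indexed by a cone mapping to $\ab[g-k]$ has codimension $\ge \frac k2+1$ in $\Asigma$, so in codegree $<g$ only finitely many orbits $[\sigma]$ contribute, independently of $g$ once $g\gg k$ — this is what forces the limit $[\ssigma]$ to appear and what makes stabilization work. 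The contribution of a single orbit $[\sigma]$ to $\BM[\topd-\pu]{}$ in the stable range is governed by the $G_\sigma$-equivariant cohomology of the torus $T_\sigma$ with character lattice dual to $V_\sigma\cap(\text{lattice})$, tensored with the stable cohomology $R$ of the base $\ab[\infty]$; taking $G_\sigma$-invariants of $H^\pu(T_\sigma)\cong\Lambda^\pu V_\sigma^\vee$ and dualizing, and accounting for the normal directions to the closure of the stratum, produces exactly a summand $(\Sym^{k-\dim\sigma}V_\sigma)^{G_\sigma}$ in codegree... — more precisely, after Poincaré-dualizing the torus-bundle contribution one gets symmetric rather than exterior powers because of the way the normal bundle to $\overline{\stratum\sigma}$ enters, and the shift by $\dim\sigma$ records the dimension of the cone.

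The second main step is to assemble these local contributions into a global answer, i.e. to run the spectral sequence (or iterated long exact sequences of the pair) associated to the stratification of $\Asigma$ by the $\stratum\sigma$, ordered by the face relation on $\Sigma_g$. The key input here is that in codegree $<g$ this spectral sequence degenerates: this should follow from the algebraicity statement of Theorem~\ref{thm:stable} together with a weight argument (all the classes in question are Tate of the expected weight), so there are no differentials and no extension problems among the graded pieces, and $\BM[\topd-\pu]{\Asigma}$ in the stable range is the direct sum of the orbit contributions described above. At this point one has
$$
\BMstab[\topd-\pu]{\Asigma[\infty]}\;\cong\;R\otimes_\QQ\bigoplus_{[\sigma]\in[\ssigma]}\bigl(\Sym^{\pu-\dim\sigma}V_\sigma\bigr)^{G_\sigma}
$$
as graded $R$-modules; the remaining point is purely combinatorial, namely to pass from the sum over \emph{all} orbits $[\sigma]$ to the symmetric algebra on the sum over \emph{irreducible} orbits. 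This uses additivity: the product maps $\pr^\ssigma$ of~\eqref{eq:prod} realize the direct-sum operation on cones geometrically, and a cone $\sigma$ that decomposes as $\sigma_1\oplus\sigma_2$ has $V_\sigma=V_{\sigma_1}\oplus V_{\sigma_2}$, $\dim\sigma=\dim\sigma_1+\dim\sigma_2$, and $G_\sigma$ containing $G_{\sigma_1}\times G_{\sigma_2}$ (with equality for generic cones and an extra symmetry when $\sigma_1\cong\sigma_2$); a Künneth/averaging computation then shows that the total contribution of all orbits is precisely the free graded-commutative (hence, since everything is in even degree, polynomial/symmetric) algebra on the contributions of irreducible orbits. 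Equivalently, the coalgebra structure on stable homology coming from $\pr^\ssigma$ forces, via Hopf's theorem, the whole thing to be $\Sym^\pu$ of its indecomposables, and the indecomposables are exactly the irreducible-cone summands.

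I expect the main obstacle to be the degeneration and the identification of the \emph{individual} orbit contribution with $(\Sym^{k-\dim\sigma}V_\sigma)^{G_\sigma}$ — in particular getting the symmetric (rather than exterior) powers and the correct degree shift by $\dim\sigma$ out of the normal-bundle bookkeeping for the not-necessarily-smooth closures $\overline{\stratum\sigma}$, which is where the transversality of the product maps (Proposition~\ref{prop:transversal}) and the passage to a level cover are essential. The combinatorial reorganization from "all cones" to "symmetric algebra on irreducible cones" is the kind of step that is conceptually clear from additivity but requires care with the automorphism groups $G_\sigma$ of reducible cones, especially repeated summands; I would handle it by first treating the case where all summands are distinct and then symmetrizing.
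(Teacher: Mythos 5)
Your skeleton --- stratify $\Asigma$ by the $\stratum\sigma$, degenerate the Gysin spectral sequence using algebraicity/evenness, compute each stratum's contribution by invariant theory, then reassemble over irreducible cones --- is the same as the paper's, which takes the per-stratum answer $\coh{\stratum[\infty]\sigma}\cong(\Sym^\pu V_\sigma)^{G_\sigma}\otimes_\QQ R$ as a black box (Theorem~\ref{thm:priorresult}, i.e.\ \cite[Thm~8.1]{grhuto}) and the degeneration as Proposition~\ref{prop:degenerate}. However, the way you close the argument has a genuine gap. Your ``equivalently, via Hopf's theorem'' shortcut is not available at the stated level of generality: for a small additive collection that is not rationally smooth (e.g.\ $\ssigma^{\op{Perf}}$), stable Borel--Moore homology carries only a coproduct and no product, so it is not a Hopf algebra --- the paper stresses that for $\Perf$ the theorem is purely a statement about Betti numbers. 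And even where Hopf's theorem does apply, it only gives freeness; it does not identify the generators with the irreducible-cone summands, which is the actual content of the statement.

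That identification is exactly the step you defer to ``a K\"unneth/averaging computation \dots\ first treating distinct summands and then symmetrizing,'' and this is where the real work lies. For $\sigma=\sigma_1^{\oplus m}$ one has $G_\sigma\cong G_{\sigma_1}\wr S_m$, and what must be proved is the wreath-product Molien identity
$$
\sum_{m\geq 0}t^{m\dim\sigma_1}P_{\sigma_1^{\oplus m}}(t)\;=\;\Exp\bigl(t^{\dim\sigma_1}P_{\sigma_1}(t)\bigr),
$$
i.e.\ that $P_{(G\wr S_m,V^{\oplus m})}(t)$ equals the plethystic substitution $h_m[P_{(G,V)}](t)$ (Proposition~\ref{prop:wreath}, from \cite{dist}); only then does the product over irreducible orbits in \eqref{eq:infiniteprod} collapse to $\Exp$ of a sum, which is the generating function of a symmetric algebra on $V^\ssigma$. ``Averaging over the extra symmetry when the summands coincide'' does not by itself single out $h_m[P]$ as opposed to, say, $\tfrac1{m!}P(t)^m$ or $\Sym^m$ applied degreewise to the space of invariants --- these all differ, and the distinction is the whole point. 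A secondary but real issue: your proposed mechanism for the per-stratum contribution (invariants of $\Lambda^\pu V_\sigma^\vee$ plus normal-bundle bookkeeping for $\overline{\stratum\sigma}$) is not where the symmetric powers come from; in \cite{grhuto} they arise from the Leray spectral sequence of the torus bundle $\stratum\sigma\to\ua[g-\rank\sigma]^{\times\rank\sigma}$, where the $H^1$ of the torus fiber transgresses and leaves $\Sym^\pu V_\sigma$ in degree $2$. Since $(\Lambda^\pu V_\sigma^\vee)^{G_\sigma}$ is in general very different from $(\Sym^\pu V_\sigma)^{G_\sigma}$, the step as you describe it would produce the wrong groups.
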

We note that the theorem only gives the description of the stable Borel--Moore homology as an $R$-module; it does not provide a canonical choice of generators as geometrically identified classes on the partial toroidal compactifications $\Asigma$. However, the description given is completely explicit, and yields an algorithm to compute the dimensions of the stable homology groups.

One particular case of interest is the perfect cone toroidal compactification $\Perf$, which was the main focus of~\cite{grhuto}. We note that a priori $\BM[\topd-k]{\Perf[\infty]}$ does not carry any algebra structure, because the collection of homology groups  $\BM[g(g+1)-\pu]{\Perf}$ is not a ring. However, our result shows that in the stable range $k<g$ the homology $\BM[g(g+1)-k]{\Perf}$ has the same Betti numbers as a polynomial algebra. This suggests that stable homology and cohomology in close to top degree admit an algebra structure, eg.~via a lift to intersection homology.

We stated Theorem~\ref{thm:algorithm} as a description of stable homology as an $R$-module, and thus need to explain how the Hodge classes extend to various partial compactifications of $\ab$. As discussed above, there is the issue of choosing a suitable extension of the Hodge classes to $\Sat$; to deal with this, one can choose
a compatible collection of extensions of the classes  $\lambda_i$ to $\Sat$ for all genera $g$ (where by compatible we mean that for any $0<k<g$ and for any $i$ the extension of $\lambda_i$ defined on $\Sat$ pulls back to the extension of $\lambda_i$ defined on $\Sat[k]$, under the map taking a product with a fixed ppav of dimension $g-k$). For (partial) toroidal compactification, there is a more direct way to extend the Hodge classes. Indeed, the Hodge rank $g$ vector bundle on $\ab$ extends as a vector bundle to any partial toroidal compactification $\ab^{\ssigma}$ --- this is proven in full generality in~\cite[Thm V.2.3]{fachbook}, while the analytic argument for rationally smooth toroidal compactification is given in~\cite{mumhirz}. Thus on any (partial) toroidal compactification $\ab^{\ssigma}$ the extensions of the Hodge classes $\lambda_i$ can be defined as the Chern classes of the extension of the Hodge vector bundle.
Thus all stable cohomology groups we consider in this paper will be $R$-algebras, and all stable homology groups (in small codegree) will be $R$-modules, and we formulate our results in these terms.

\smallskip
While our main theorem above holds for any small additive collection, the situation is particularly good if  the (partial) compactification $\Asigma[\infty]$ is rationally smooth. This is the case if and only if all cones $\sigma \in \Sigma$ are simplicial, see eg.~\cite[Theorem 11.4.8]{coxlittleschenck}. Then, by Poincar\'e duality, the cap product with the fundamental class of $\Asigma$ defines an isomorphism between cohomology in degree $k$ and Borel--Moore homology in codegree $k$.

One such case of particular interest are  the matroidal partial toroidal compactifications $\Matr$. These were introduced and first studied by Melo and Viviani~\cite{mevi} who showed that the cones defined starting from simple regular matroids, and forming the fan $\ssigma^{\op{Matr}}$, are in fact the intersection $\Sigma_g^{\op{Matr}}=\Sigma_g^{\op{Perf}}\cap\Sigma_g^{\op{Vor}}$  of the perfect cone and second Voronoi fans. These partial compactifications $\Matr$ are rationally smooth (all cones of $\ssigma^{\op{Matr}}$ are simplicial by~\cite[Theorem 4.1]{erry3}). In this case the theorem above takes an especially explicit form, as the irreducibility of the cone is governed by the properties of the corresponding matroid. Before we formulate this, we recall that the rank
of a matroid $M$ is the number of elements in any of its bases, and that this coincides with the dimension of the corresponding matroidal cone $\sigma_M$. We also recall that the $1$-sum of matroids corresponds to taking direct sum of the corresponding cones.
\begin{cor}\label{cor:Matr}
The stable cohomology $H^\pu(\Matr[\infty])$ is the free $R$-algebra generated by  $\Sym^\pu(V^{\op{Matr}})$. Here $V^{\op{Matr}}$ is the graded vector space which  is trivial in odd degree and is given in even degree by
$$
V^{\op{Matr}}_{2k}=\bigoplus_{\substack {\text{$M\in ${\{isomorphism classes of simple}}\\ \text{1-sum-irreducible regular matroids\}}}}
(\Sym^{k-\rank M}(\Span_\QQ(\sigma_M))^{\Aut(M)}
$$
where $\sigma_M$ denotes the cone in the matroidal fan corresponding to $M$, and $\Aut(M)$ denotes the automorphism group of the matroid $M$.
\end{cor}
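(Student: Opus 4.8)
The plan is to obtain Corollary~\ref{cor:Matr} by specializing Theorem~\ref{thm:algorithm} to the matroidal collection $\ssigma^{\op{Matr}}=\lbrace\Sigma_g^{\op{Matr}}\rbrace_{g\ge 0}$ and translating its combinatorial input into the language of regular matroids.

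First I would check the hypotheses of Theorem~\ref{thm:algorithm}. By \cite{mevi} we have $\Sigma_g^{\op{Matr}}=\Sigma_g^{\op{Perf}}\cap\Sigma_g^{\op{Vor}}$, so every matroidal cone is in particular a perfect cone cone; since the dimension and the rank (Definition~\ref{def:rank}) of a cone do not depend on the ambient fan, smallness of $\ssigma^{\op{Matr}}$ follows from that of $\ssigma^{\op{Perf}}$, which underlies the results of \cite{grhuto}. Additivity holds because a direct sum of matroidal cones is again matroidal --- corresponding on the matroid side to the $1$-sum operation --- so that every product map $\ab[g_1]\times\ab[g_2]\to\ab[g_1+g_2]$ extends as in \eqref{eq:prod}; since additivity is stronger than admissibility, the collection is admissible as well. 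Finally, all cones of $\ssigma^{\op{Matr}}$ are simplicial by \cite[Theorem~4.1]{erry3}, so $\Matr[\infty]$ is rationally smooth.

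The core of the argument is the dictionary between matroidal cones and matroids, going back to \cite{mevi}. Under it, the set $[\ssigma^{\op{Matr}}]$ of $\GL(g,\ZZ)$-orbits of matroidal cones is in bijection with the set of isomorphism classes of simple regular matroids $M$, with $M$ corresponding to the matroidal cone $\sigma_M$; moreover $\dim\sigma_M=\rank M$, the $\QQ$-span $V_{\sigma_M}$ equals $\Span_\QQ(\sigma_M)$ by definition, and the refined automorphism group $G_{\sigma_M}$ of Definition~\ref{d:span,auto} is identified with $\Aut(M)$, since the stabilizer of $\sigma_M$ in $\GL(g,\ZZ)$ acts on $\Span_\QQ(\sigma_M)$ precisely through the automorphisms of $M$. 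It then remains to match the irreducibility conditions: I would show that a matroidal cone $\sigma_M$ is a direct sum of two nonzero cones of $\ssigma^{\op{Matr}}$ if and only if $M$ is a $1$-sum of two regular matroids of smaller rank. One direction is immediate from the compatibility of the $1$-sum with the direct sum of cones; for the converse one must show that every decomposition $\sigma_M=\sigma_1\oplus\sigma_2$ into nonzero matroidal cones arises from a $1$-sum decomposition of $M$, which follows from the explicit description of matroidal cones and of their faces.

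Substituting this dictionary into the formula of Theorem~\ref{thm:algorithm} yields the stated description of $\BMt{\Matr[\infty]}$ as the free $R$-module on $\Sym^\pu(V^{\op{Matr}})$, with $V^{\op{Matr}}_{2k}$ as in the statement. Since $\Matr[\infty]$ is rationally smooth, capping with the fundamental class gives Poincar\'e duality isomorphisms $H^k(\Matr[\infty])\cong\BM[\topd-k]{\Matr[\infty]}$ compatible with the $R$-module structures --- cup product with the extended Hodge classes corresponding to cap product with them --- and, $\Matr[\infty]$ being additive and rationally smooth, its stable cohomology is moreover a Hopf algebra and hence, by Hopf's theorem, a free $R$-algebra whose generators are pinned down by the $R$-module computation; this gives the asserted identification of $H^\pu(\Matr[\infty])$ with the free $R$-algebra generated by $\Sym^\pu(V^{\op{Matr}})$. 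I expect the main obstacle to be the purely combinatorial step: the identification $G_{\sigma_M}=\Aut(M)$ and, above all, the equivalence between irreducibility of $\sigma_M$ in $\ssigma^{\op{Matr}}$ and $1$-sum-irreducibility of $M$. Once these are in place, together with the cited results of \cite{mevi,erry3,grhuto}, the remainder of the proof is bookkeeping.
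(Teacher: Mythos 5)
Your proposal is correct and follows essentially the same route as the paper: the corollary is obtained by specializing Theorem~\ref{thm:algorithm} to the matroidal collection, using the dictionary of \cite{mevi} (orbits of matroidal cones $\leftrightarrow$ simple regular matroids, $\dim\sigma_M=\rank M$, direct sum $\leftrightarrow$ $1$-sum, $G_{\sigma_M}\cong\Aut(M)$), with simpliciality from \cite[Theorem 4.1]{erry3} giving rational smoothness, hence Poincar\'e duality and the Hopf-algebra argument upgrading the $R$-module statement to the free $R$-algebra statement. The hypothesis checks and the irreducibility matching you flag are exactly the (mostly implicit) ingredients the paper relies on.
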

This corollary provides an explicit combinatorial algorithm to compute the stable cohomology of the matroidal partial compactification in any degree, once 1-sum-irreducible simple regular matroids in up to that rank are classified.

While we do not have an analogous combinatorial closed formula for the perfect cone toroidal compactification $\Perf$, our algorithm can still be implemented in that case.
In Section~\ref{sec:examples} we compute the stable cohomology of $\Matr[\infty]$ in degree up to 16, and stable Borel-Moore homology of $\Perf[\infty]$ in codegree up to 16 (note that $\Perf[\infty]$ is not rationally smooth), by using the known enumerations of the corresponding cones, and studying their automorphism groups and invariants by hand. In the appendix Dutour Sikiri\'c explains details about the enumeration of cones in the perfect cone decomposition, using the results of~\cite{onlinelistcones}, \cite{DutourHulekSchuermann} and \cite{numberofperfectforms} on classification of perfect cones.  He then gives a brief overview of his new computational methods and implementation of our algorithm, which allow to push these computations considerably further. The final results of his computation are the following two theorems.
\begin{thm}\label{thm:bettimatr}
The stable Betti numbers of $\Matr$ in degree $\leq 30$ are as follows:
$$
\begin{array}{l|rrrrrrrrrr}
\text{\rm degree $k$}&0&2&4&6&8&10&12&14&16&18\\
\hline
\\[-2ex]
\dim H^k(\Matr[\infty])&
1&2&4&9&18&37&79&169&379&902
\end{array}
$$

$$
\begin{array}{l|rrrrrr}
\text{\rm degree $k$}&20&22&24&26&28&30\\
\hline
\\[-2ex]
\dim H^k(\Matr[\infty])&
2287&6270&18864&62466&228565& 920313
\end{array}.
$$
\end{thm}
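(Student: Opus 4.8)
The plan is to deduce Theorem~\ref{thm:bettimatr} from Corollary~\ref{cor:Matr} by an explicit generating-function computation, the combinatorial core of which is carried out by computer in the appendix. By Corollary~\ref{cor:Matr} the stable cohomology $H^\pu(\Matr[\infty])$ is, as a graded $\QQ$-vector space, $R\otimes_\QQ\Sym^\pu(V^{\op{Matr}})$, and $R=\QQ[\lambda_1,\lambda_3,\dots]$ has exactly one generator in each cohomological degree $4i+2$, $i\ge 0$. Writing $n_{2j}:=\dim V^{\op{Matr}}_{2j}$, the Poincar\'e series to be determined is therefore
\[
  \sum_{k\ge 0}\dim H^k(\Matr[\infty])\,t^k
  \;=\;\Bigl(\,\prod_{i\ge 0}\frac1{1-t^{4i+2}}\Bigr)\cdot\prod_{j\ge 1}\frac1{(1-t^{2j})^{n_{2j}}}.
\]
Since the summand $(\Sym^{k-\rank M}\Span_\QQ(\sigma_M))^{\Aut(M)}$ in the formula for $V^{\op{Matr}}_{2k}$ vanishes as soon as $\rank M>k$, only the finitely many simple regular matroids $M$ with $\rank M\le 15$ (recall that $\rank M=\dim\sigma_M$) can affect $H^{\le 30}(\Matr[\infty])$, so the task reduces to computing $n_2,n_4,\dots,n_{30}$.

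For each $r\le 15$ this requires three ingredients. First, a classification up to isomorphism of the simple regular matroids $M$ with $\rank M=r$ which are irreducible with respect to the $1$-sum operation --- equivalently, the connected ones; this rests on Seymour's decomposition theory of regular matroids together with the matroid-generation and canonical-form algorithms explained in the appendix. Second, for each such $M$, its automorphism group $\Aut(M)$ and the action of $\Aut(M)$ on $\Span_\QQ(\sigma_M)$: since the matroidal cones are simplicial by~\cite[Theorem~4.1]{erry3} and $M$ is simple, the extremal rays of $\sigma_M$ --- one rank-one form for each element of the ground set --- form a basis of $\Span_\QQ(\sigma_M)$ that is permuted by $\Aut(M)$, so the relevant representation is simply the permutation representation of $\Aut(M)$ on the ground set of $M$. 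Third, the dimensions $\dim(\Sym^d\Span_\QQ(\sigma_M))^{\Aut(M)}$, computed from the cycle-index (Molien) formula for permutation groups,
\[
  \sum_{d\ge 0}\dim\bigl(\Sym^d\Span_\QQ(\sigma_M)\bigr)^{\Aut(M)}\,u^d
  \;=\;\frac1{|\Aut(M)|}\sum_{h\in\Aut(M)}\ \prod_{c\ \text{a cycle of}\ h}\frac1{1-u^{|c|}}\,;
\]
the coefficient of $u^{k-\rank M}$ here is the contribution of $M$ to $n_{2k}$. Summing these contributions over all matroids enumerated in the first step yields $n_2,\dots,n_{30}$, which one then substitutes into the product formula above and expands to order $t^{30}$ to read off the table.

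The genuine obstacle is the first step: the number of connected simple regular matroids grows very fast, so that listing them --- let alone computing all their automorphism groups and cycle indices --- for $\rank M$ up to $15$ is infeasible by naive methods and requires the efficient generation, isomorphism testing, and invariant-theoretic bookkeeping implemented in Dutour Sikiri\'c's appendix. Once those data are assembled, the remaining invariant-theoretic and power-series manipulations are entirely routine. As a consistency check, the resulting values in degrees $\le 16$ must agree with those computed by hand in Section~\ref{sec:examples}.
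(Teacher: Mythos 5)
Your proposal is correct and follows essentially the same route as the paper: sum the Molien (cycle-index) series of $\Aut(M)$, acting by permutation on the ground set, over all connected simple regular matroids with at most $15$ elements, multiply by the Poincar\'e series of $R$, and expand to order $t^{30}$; the paper carries this out by hand through degree $16$ (Tables~\ref{t:matrcones7} and~\ref{table:Gsigma}) and delegates the full enumeration to the appendix, exactly as you do. One small point of care: the exponent $k-\rank M$ in Corollary~\ref{cor:Matr} must be read as $k-\dim\sigma_M$ with $\dim\sigma_M$ equal to the cardinality of the ground set of $M$ (as in Theorem~\ref{thm:algorithm}), which is how you in fact use it, so your truncation at matroids on at most $15$ elements is the correct one.
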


\begin{thm}\label{thm:bettiperf}
The stable Betti numbers of $\Perf$ in codegree $\leq 22$ are as follows:
$$
\begin{array}{l|rrrrrrrrrrr}
 \text{\rm codegree } k&2&4&6&8&10&12&14&16&18&20&22\\
 \hline
 \\[-2ex]
\bar H_{\topd-k}(\Perf[\infty])
 & 2& 4& 9& 18& 38& 84& 193& 494&1529&6791&70464
\end{array},
$$
where the result in degree $22$ is conjectural (but certainly gives a lower bound for this dimension).
\end{thm}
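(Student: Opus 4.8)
The plan is to deduce Theorem~\ref{thm:bettiperf} from Theorem~\ref{thm:algorithm}, applied to the perfect cone collection $\ssigma^{\op{Perf}}=\lbrace\Sigma_g^{\op{Perf}}\rbrace_{g\ge 0}$, thereby turning it into a finite invariant-theoretic computation fed by the combinatorics of perfect cones. First one checks the hypotheses of Theorem~\ref{thm:algorithm}: the perfect cone fans are closed under direct sums, with the product maps~\eqref{eq:prod} realised by the block-diagonal inclusions $\Sym^2(\RR^{g_1})\oplus\Sym^2(\RR^{g_2})\hookrightarrow\Sym^2(\RR^{g_1+g_2})$, so $\ssigma^{\op{Perf}}$ is additive; and it is small by~\cite{grhuto}. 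Theorem~\ref{thm:algorithm} then identifies $\BM[\topd-\pu]{\Perf[\infty]}$ with the free $R$-module on $\Sym^\pu(V^{\op{Perf}})$, so that $\dim\BM[\topd-k]{\Perf[\infty]}$ is the coefficient of $t^k$ in the Poincar\'e series
$$
P(t)=\Bigl(\prod_{i\ge 0}\frac{1}{1-t^{4i+2}}\Bigr)\cdot\prod_{m\ge 1}\frac{1}{(1-t^{2m})^{d_{2m}}},
\qquad d_{2m}:=\dim_\QQ V^{\op{Perf}}_{2m},
$$
the first factor being the Hilbert series of $R=\QQ[\lambda_1,\lambda_3,\dots]$, where $\lambda_{2i+1}$ contributes in codegree $4i+2$ via the cap product. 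Since the factors with $2m>22$ do not affect coefficients below $t^{23}$, to read off the table in codegrees $\le 22$ it suffices to compute the integers $d_2,d_4,\dots,d_{22}$.

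The second step evaluates these $d_{2m}$ from the explicit formula of Theorem~\ref{thm:algorithm},
$$
d_{2m}=\sum_{\substack{[\sigma]\in[\ssigma^{\op{Perf}}]\ \text{irreducible}\\ \dim\sigma\le m}}
\dim_\QQ\bigl(\Sym^{m-\dim\sigma}V_\sigma\bigr)^{G_\sigma}.
$$
Because every ray of the perfect cone decomposition is a rank-one form, a $d$-dimensional cone is spanned by rank-one forms $v_1v_1^T,\dots,v_Nv_N^T$; choosing among the $v_j$ a basis $v_{j_1},\dots,v_{j_r}$ of their span, of dimension $r=\rank\sigma$, yields linearly independent forms $v_{j_1}v_{j_1}^T,\dots,v_{j_r}v_{j_r}^T$, whence $\rank\sigma\le\dim\sigma$. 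In particular only cones of dimension $\le 11$, and thus of rank $\le 11$, contribute to $d_2,\dots,d_{22}$, and each such orbit is a $\GL(r,\ZZ)$-orbit of a face of a perfect domain in $\Sym^2(\RR^r)_{\ge 0}$ for some $r\le 11$. For every such orbit one must then (i)~decide whether $\sigma$ decomposes as a direct sum of two nonzero cones of the fan, discarding it if so; (ii)~determine the refined automorphism group $G_\sigma\le\GL(V_\sigma)$ of Definition~\ref{d:span,auto} together with its representation on $V_\sigma=\Span_\QQ\sigma$; (iii)~read off $\dim_\QQ(\Sym^j V_\sigma)^{G_\sigma}$ for $0\le j\le 11-\dim\sigma$ from Molien's series $\frac{1}{|G_\sigma|}\sum_{\gamma\in G_\sigma}\det\bigl(1-t\,\gamma|_{V_\sigma}\bigr)^{-1}$. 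Summing over orbits yields the $d_{2m}$, and expanding $P(t)$ modulo $t^{23}$ produces the stated table.

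The entire content of the theorem thus lies in supplying the data (i)--(iii) for all relevant cones, and this is the main obstacle: the number of $\GL(r,\ZZ)$-orbits of faces of perfect domains grows explosively with $r$, and merely enumerating them requires deciding $\GL(r,\ZZ)$-equivalence of cones --- a lattice-isomorphism problem --- both within a single perfect domain and across domains attached to distinct perfect forms. This enumeration, which starts from the classification of perfect forms of~\cite{onlinelistcones}, \cite{DutourHulekSchuermann}, \cite{numberofperfectforms}, together with the computation of the automorphism groups and their invariants, is carried out in the appendix; the low codegrees (up to $16$) can in addition be checked by hand as in Section~\ref{sec:examples}. Finally, the value in codegree $22$ is flagged as conjectural because the list of perfect cones contributing there has not been certified complete: any cone orbit that has been missed only enlarges $V^{\op{Perf}}$, so the free $R$-module built from the cones that were actually found embeds into $\BM[\topd-\pu]{\Perf[\infty]}$, and its dimension in codegree $22$ is therefore a lower bound for the true Betti number.

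The companion Theorem~\ref{thm:bettimatr} is obtained in precisely the same way from Corollary~\ref{cor:Matr}, now with the matroidal fan in place of the perfect cone fan: since $\Matr[\infty]$ is rationally smooth one has $\dim H^k(\Matr[\infty])=\dim\BM[\topd-k]{\Matr[\infty]}$, the generators are indexed by $1$-sum-irreducible simple regular matroids $M$ of rank $\le 15$ with $G_\sigma=\Aut(M)$ acting on $\Span_\QQ(\sigma_M)$, and in this range the classification of such matroids --- ultimately via Seymour's decomposition of regular matroids into graphic, cographic and $R_{10}$ pieces --- is complete, so the Betti numbers in degrees $\le 30$ come out unconditionally.
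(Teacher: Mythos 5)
Your proposal is correct and follows essentially the same route as the paper: apply Theorem~\ref{thm:algorithm} to the (small, additive) perfect cone collection, reduce the table to computing $\dim_\QQ(\Sym^{m-\dim\sigma}V_\sigma)^{G_\sigma}$ via Molien's formula over the irreducible cone orbits of dimension at most $11$ (using $\rank\sigma\le\dim\sigma$), and defer the enumeration of those orbits and their refined automorphism groups to the appendix, with the dimension-$11$, rank-$11$ list only conjecturally complete — whence the caveat and the lower-bound claim in codegree $22$. The paper's hand computation for codegree $\le 16$ differs only in bookkeeping: it runs Theorem~\ref{thm:algorithm} on the auxiliary additive sub-collection generated by perfect cones of dimension $\le 7$ and then adds the count of irreducible dimension-$8$ cones, rather than truncating the full generating series as you do.
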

For both of these theorems, the results in (co)degree up to 12 were computed in~\cite[Thm 1.6]{grhuto} with much more work, and match what we obtain now, except for the discrepancy in degree 12. This mismatch is due to an unfortunate error in~\cite[Table 1]{grhuto}, where the correctly obtained result of \cite[Corollary 10.3]{grhuto} is reported incorrectly in the table. The theorems above thus extend the computations done in~\cite{grhuto}, in a much quicker way, up to degree 30 for $\Matr[\infty]$, and up to codegree $22$ for $\Perf[\infty]$.

The reason that  $\dim\BM[\topd-22]{\Perf[\infty]}$ is only known conjecturally is due to the fact that the classification of configurations of vectors of dimension 11 and rank 11 is not finished, see the appendix for a discussion.

The computations above in fact yield more information --- they allow us to compute the stable Borel-Moore homology of subsets of $\Matr[\infty]$ and  $\Perf[\infty]$ that correspond to cones that are direct sum of cones of rank up to 15 and respectively 11. From the degeneracy in the stable range of the suitable Gysin spectral sequence (for Borel-Moore homology), such as used in~\cite{grhuto}, it follows that stable Borel-Moore homology of any open union of toroidal strata
of a partial toroidal compactification is a subspace of the stable Borel-Moore homology of the original partial compactification. Thus in fact our methods gives explicit lower bounds for the stable cohomology of $\Matr$ in any degree, and for the stable Borel-Moore homology of $\Perf$ in any codegree. The generating functions for these lower bounds are available on \cite{onlinelistcones}.

\subsection*{The structure of the paper}
In Section \ref{sec:stable} we recall the definition and properties of an admissible collection of admissible fans. We then introduce two properties of such collections which are the basis of our work, namely {\em additivity} and {\em smallness}. The first of these properties guarantees that
the toroidal compactifications behave well with respect to the product embedding $\ab[k] \times \ab[g-k] \to \ab$. This will be crucial for our computational approach. The latter condition is what is required to guarantee stabilization of the Borel-Moore homology in the first place. We also rephrase the results of \cite{grhuto} in the form needed for the development of our algorithm.

In Section \ref{sec:compute} we develop the techniques for working with representations of wreath products. This is the essential technical tool which will allow us to make use of the additive structure of fans. Next, we recall the notion of  plethystic substitution in Section \ref{sec:proof}. This will enable us to state our main result, Theorem \ref{thm:algorithm}, in a compact form, and give its proof. The transversality statement for product embeddings will be proven in Section~\ref{sec:transverse}. Finally, we will discuss some examples in Section \ref{sec:examples} in order to demonstrate the effectiveness of our algorithm, starting from the additive collection of ``standard'' cones (i.e.~those of the form $\sigma_{1+\dots+1}$), which now reduce to a one-line computation, and then proceeding to compute the stable Betti numbers of $\Perf$ for codegree up to 16. In the appendix Dutour Sikiri\'c explains his method and implementation of an algorithm that allows him to push these computations to degree up to 30 for $\Matr[\infty]$ and to codegree up to 22 for $\Perf[\infty]$.

\section{Stable partial compactifications and product maps}\label{sec:stable}
In this section we recall the data needed to define a partial toroidal compactification of $\ab$, and introduce various properties of such compactifications required for our results to apply. We refer to~\cite{amrtbook} and~\cite{namikawabook} for the basic theory.

The construction of a toroidal compactification consists of two steps: firstly, one has to construct a compactification for each cusp, and secondly one has to glue these partial compactifications in order to obtain a global compactification $\Asigma$. In the case of principal polarizations, and this is the situation which we will always be in, there is, up to the action of $\Sp(2g,\ZZ)$,  only one cusp for each integer $k < g$,
a fact which we will use frequently. These cusps are in $1$-to-$1$ correspondence with the (orbits of) isotropic subspaces $\QQ^k$ of $\QQ^{2g}$. As a model we can take the
subspace generated by the first $k$ standard basis vectors.

In order to describe the data required to construct a partial compactification  in the direction of such a cusp, we recall that
a cone in a real vector space is called rational polyhedral if it is generated, over $\RR$, by finitely many rational vectors. We denote by $\Sym^2_{>0}(\RR^k)$ the cone of positive-definite symmetric $k\times k$ real matrices, and denote by $\Sym^2_{\rc}\RR^k$ its rational closure: the cone of positive semidefinite symmetric $k\times k$ real matrices such that the kernel has a basis consisting of vectors in $\QQ^k$.  Neither of these two cones is rational polyhedral for $k\ge 2$, and the fan is used to describe a cover of $\Sym^2_{>0}(\RR^k)$ by rational polyhedral cones contained in $\Sym^2_{\rc}\RR^k$.
\begin{df}\label{def:admfan}
An {\em admissible fan}
$\Sigma_k$ is a collection of rational polyhedral cones $\sigma\subset\Sym^2_{\rc}(\RR^k)$ such that
\begin{itemize}
\item[(i)] If $\sigma\in\Sigma_k$ is a cone, then any face $\tau$ of $\sigma$ is also a cone in $\Sigma_k$.
\item[(ii)] If $\sigma,\tau\in\Sigma_k$, then also the intersection $\sigma\cap\tau$ is a cone in $\Sigma_k$.
\item[(iii)] The union of all cones $\sigma\in\Sigma_k$ contains $\Sym^2_{>0}(\RR^k)$.
\item[(iv)] The collection of cones $\Sigma_k$ is preserved under the action of $\GL(k,\ZZ)$ on $\Sym^2_\rc(\RR^k)$, and the number of orbits of cones in $\Sigma_k$ under this action is finite.
\end{itemize}

Moreover, we say that the admissible fan $\Sigma_k$ is {\em complete} if it satisfies
\begin{itemize}
\item[(v)]
 The union of cones in $\Sigma_k$ is equal to $\Sym^2_\rc(\RR^k).≈$
  \end{itemize}
\end{df}

Given such a  fan, we can construct a partial compactification in the direction of the cusp corresponding to $\QQ^k$.
As we have said, the different partial compactifications need to be compatible with each other. This leads to the notion of an admissible
collection of admissible fans.
As we will work with arbitrarily large $g$, we will give the relevant definition directly for an infinite sequence of admissible fans.

\begin{df}\label{def:admcollection}
A sequence $\ssigma=\lbrace\Sigma_k\rbrace_{k\ge 0}$ is called an {\em admissible collection} if
\begin{itemize}
\item[(i)] for any $k\ge 0$, $\Sigma_k$ is an admissible fan in $\Sym^2_{\rc}(\RR^k)$.
\item[(ii)] for any $k<k'$ the equality
  $$
     \Sigma_k=\lbrace\sigma\in\Sigma_{k'} \mid\sigma\subset\Sym^2_{\rc}\RR^k\rbrace
  $$
  holds for one (and hence any) coordinate embedding $\RR^k\hookrightarrow\RR^{k'}$ induced by a primitive embedding $\ZZ^k \hookrightarrow\ZZ^{k'}$.
\end{itemize}
\end{df}
Choosing an admissible collection of admissible fans thus allows us to construct (partial) compactifications $\Asigma$ for each $g$. A given compactification $\Asigma$ is compact if and only if fan is complete, as in Definition \ref{def:admfan}.(v). Every toroidal compactification $\Asigma$ allows a contraction morphism  $\phi_\ssigma: \Asigma \to \Sat$. Since $\Sat = \ab[g] \sqcup \ab[g-1] \sqcup \dots \sqcup \ab[0]$, we can consider the preimages  $\phi_\ssigma^{-1}(\ab[g-k])$. The fan $\Sigma_k$ gives a stratification of each such preimage into strata corresponding to the (orbits of) cones in $\Sigma$. As we shall see below, these strata can be described explicitly in terms of the cones $\sigma$.
Finally, we  remark that admissibility implies the existence of natural maps $\Asigma \rightarrow \Asigma[g+1]$ for all $g$, given by taking a product with some fixed elliptic curve $E$.

Our goal is to study stability, and for this one needs stabilization maps. The simplest map to consider is the map $\ab[g_1]\times\ab[g_2]\to\ab[g_1+g_2]$ that sends a pair of principally polarized abelian varieties to their product. We want this map to extend to $\Asigma$, as in~\eqref{eq:prod}.

\begin{df}
An admissible collection $\ssigma=\lbrace \Sigma_g\rbrace_{g\ge 0}$ is called an {\em additive collection} if for any $g_1,g_2$, and for any $\sigma_1\in\Sigma_{g_1}$ and $\sigma_2\in\Sigma_{g_2}$, the direct sum of the cones $\sigma_1\oplus\sigma_2$ is a cone in $\Sigma_{g_1+g_2}$.
\end{df}
From the construction of partial toroidal compactifications it follows that for any additive collection, the product maps~\eqref{eq:prod} indeed extend, and are in fact transverse. We will prove the following result in the Section \ref{sec:transverse}.
\begin{prop}\label{prop:transversal}
Let $\lbrace\Sigma_g\rbrace$ be an additive collection of admissible fans. Then for any $0\leq k\leq g$ the product map $\ab[k] \times\ab[g-k]\to\ab$
extends, after going to a suitable level structure, to a transverse embedding $\Asigma[k] \times \Asigma[g-k] \to \Asigma$.
\end{prop}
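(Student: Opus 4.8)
The plan is to reduce the statement to a local, toric computation near the boundary, where transversality of the product map can be checked on the level of torus embeddings associated to the cones. First I would recall the explicit local description of a partial toroidal compactification $\Asigma$ near a stratum: in the direction of the cusp corresponding to an isotropic $\QQ^{g-k}\subset\QQ^{2g}$, the boundary is built from toric varieties $X_{\Sigma_{g-k}}$ associated to the fan $\Sigma_{g-k}$ in $\Sym^2_{\rc}(\RR^{g-k})$, fibered over an abelian part and modded out by the action of an arithmetic group. After passing to a sufficiently deep level cover $\ab(\ell)$ the group action becomes free, the boundary becomes a genuine (relative) torus embedding, and $\Asigma(\ell)$ is locally (analytically or \'etale) a product of a smooth base with the toric variety defined by the cone $\sigma$ and its neighboring cones in the fan. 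This is the standard picture from \cite{amrtbook}, \cite{namikawabook}, \cite{fachbook}.

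Next I would translate the product map into this local model. Under $\ab[k]\times\ab[g-k]\to\ab$, a principally polarized abelian variety of dimension $k$ times one of dimension $g-k$ sits inside a product cusp; on the level of cone data, additivity says precisely that $\sigma_1\oplus\sigma_2\in\Sigma_g$ whenever $\sigma_i\in\Sigma_{g_i}$, and more importantly the relevant inclusion of lattices is the block-diagonal primitive embedding $\ZZ^{g_1}\oplus\ZZ^{g_2}\hookrightarrow\ZZ^g$, inducing $\Sym^2_{\rc}(\RR^{g_1})\oplus\Sym^2_{\rc}(\RR^{g_2})\hookrightarrow\Sym^2_{\rc}(\RR^g)$. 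Using the compatibility condition Definition~\ref{def:admcollection}.(ii), the fans restrict correctly, so the map of toric varieties induced on the boundary is a closed embedding of torus embeddings coming from a split inclusion of cocharacter lattices; such a map is automatically a regular (transverse) embedding, because locally it looks like $\mathbb{A}^m\hookrightarrow\mathbb{A}^m\times\mathbb{A}^n$ up to units, the normal bundle being the toric bundle attached to the complementary lattice directions. Combining the boundary picture with the interior (where the product map $\ab[k]\times\ab[g-k]\to\ab$ is already known to be a closed embedding, and is transverse after level structure since both sides are smooth), one glues these local statements: transversality is a local condition, so it suffices to check it stratum by stratum, and each stratum contributes either the interior embedding of moduli spaces or a split toric embedding as above.

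The key steps, in order, are: (1) fix a level $\ell$ large enough that $\ab(\ell)$ and all the relevant partial compactifications $\Asigma(\ell)$ are smooth away from the toric directions and all arithmetic-group actions in the local charts are free (so the boundary charts are honest torus embeddings, not stacky quotients); (2) write down the local analytic/\'etale chart of $\Asigma(\ell)$ at a point of the stratum $\stratum{\sigma_1\oplus\sigma_2}$ as (smooth base)$\times$(affine toric chart of $\sigma_1\oplus\sigma_2$), and the corresponding chart for the source as (smaller smooth base)$\times$(affine toric chart of $\sigma_1$)$\times$(affine toric chart of $\sigma_2$); (3) identify the product map in these coordinates with a block-diagonal inclusion, compute that the conormal sheaf is locally free of the expected rank (the expected codimension is $g(g+1)/2-g_1(g_1+1)/2-g_2(g_2+1)/2$ in the toric/Hodge directions plus the abelian-part contribution), hence the embedding is regular; (4) conclude transversality in the sense that the embedding is a regular (lci) closed embedding, which is what is needed in Section~\ref{sec:transverse} to get Gysin maps in Borel--Moore homology.

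The main obstacle I expect is step (2): making precise and uniform the local structure of $\Asigma$ along boundary strata, including the contribution of the semiabelian (non-toric) directions and the precise form of the identification between the local chart of the target and the product of local charts of the two factors. One has to be careful that the ``abelian part'' of the boundary fibration for $\ab[g]$ restricts correctly to the product of abelian parts for $\ab[k]$ and $\ab[g-k]$ under the product cusp, and that the torus bundle twisting the toric fibers behaves compatibly; invoking \cite{fachbook} for the general functorial description of the boundary charts (the mixed Shimura / degeneration data formalism) should make this manageable. The purely combinatorial input --- that a split inclusion of lattices gives a regular embedding of the associated toric varieties --- is standard and easy; the genuine work is in checking that additivity plus compatibility of the collection $\ssigma$ forces the geometric product map to respect this local product structure.
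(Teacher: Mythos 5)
Your proposal follows essentially the same route as the paper's proof: pass to a neat level cover, use the local description of $\Asigma$ near a cusp as a (base)$\times$(toric) fibration coming from the partial quotient construction, and observe that additivity makes the boundary map locally the split toric embedding $T_{\Sigma_k}\times T_{\Sigma_{g-k}}\hookrightarrow T_{\Sigma_k}\times T_{\Sigma_{g-k}}\times\TT_{k,g-k}$, which is manifestly transverse; the compatibility of the abelian/semiabelian directions that you flag as the main obstacle is exactly what the paper's commutative diagram of $\overline{\mathcal T}$'s over $\ua^{\times k}$ over $\ab$ records. The only cosmetic difference is that you phrase the conclusion in terms of regular (lci) embeddings and conormal sheaves, whereas the paper uses an explicit analytic local-product definition of transversality.
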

\begin{rem}
We can take any  level cover such that the corresponding arithmetic group is neat, in particular we can take a full level-$n$ cover for $n \geq 3$.
\end{rem}
All three known admissible collections of complete fans, namely the perfect cones fan $\Sigma_g^{\operatorname{Perf}}$, second Voronoi fan $\Sigma_g^{\operatorname{Vor}}$, and central cones fan $\Sigma_g^{\operatorname{Centr}}$, are additive. Furthermore, the admissible collection of matroidal cones $\Sigma_g^{\operatorname{Matr}}:=\Sigma_g^{\operatorname{Perf}}\cap\Sigma_g^{\operatorname{Vor}}$ (see~\cite{mevi}) is also additive --- which is immediate from the fact that there is a well-defined 1-sum operation for matroids, and of course also follows from the fact that both $\Sigma_g^{\operatorname{Perf}}$ and $\Sigma_g^{\operatorname{Vor}}$ are additive.
We note that all three toroidal compactifications $\Perf,\Vor,\Centr$ fail to be  rationally smooth for $g \geq 4$, while $\Matr$ is rationally smooth for any $g$.
The latter follows from \cite[Theorem 4.1]{erry3}. For a discussion of the singular loci of these toroidal compactification we also refer the reader to \cite{DutourHulekSchuermann}.

The product map we are particularly interested in is taking the product of a ppav with a fixed elliptic curve $E\in\ab[1]$, which gives an embedding $\Asigma\to\Asigma[g+1]$ for any admissible collection of fans $\ssigma$.
While this map depends on $E$, its homotopy class is independent of the choice of $E$. Being a transverse embedding (which can easily be checked with the arguments given in Section \ref{sec:transverse}) it defines, in view of~\cite[Ex. 19.2.1]{fultonintersection}, a Gysin map on Borel--Moore homology
\begin{equation}\label{eq:stablemap}
 \BM[(g+1)(g+2)-\pu]{\Asigma[g+1]} \rightarrow \BM[g(g+1)-\pu]\Asigma.
\end{equation}

Thus given an admissible collection, we have a sequence of Gysin maps, of which we can then take the inverse limit, writing it as
$$
  \BM[\topd-\pu]{\Asigma[\infty]}:=  \underset{g}{\underleftarrow{\lim}}\; \BM[g(g+1)-\pu]\Asigma.
$$
We will call this  {\em stable Borel--Moore homology}. We recall that since the Hodge vector bundle extends as a vector bundle to any partial toroidal compactification, the ring $R$ naturally extends to cohomology classes on $\Asigma$, and the stable Borel--Moore homology is thus an $R$-module.

If moreover $\ssigma$ is also additive, then the Gysin maps associated to the product maps define a coproduct structure
$$
  \BM[\topd-\pu]{\Asigma[\infty]} \longrightarrow
  \BM[\topd-\pu]{\Asigma[\infty]}\otimes \BM[\topd-\pu]{\Asigma[\infty]},
$$
where the right hand side uses K\"unneth formula for $\Asigma[g_1]\times\Asigma[g-g_1]$ in the stable range. When $\Asigma$ is rationally smooth for all $g$, i.e. if $\ssigma$ is simplicial, this is the coproduct that, together with the cup product, gives stable cohomology the structure of a Hopf algebra.

For arbitrary admissible collections, the stable homology defined above may be infinite-dimensional. Indeed, for example it is known that the
inequality $\dim H^2(\Vor,\QQ)\ge g-3$ holds (see the discussion in~\cite[\S7]{grhuto}), and thus $H^2(\Vor[\infty],\QQ)$ is infinite-dimensional. The main result of~\cite{grhuto} is a proof that for $\ssigma^{\operatorname{Perf}}$ the stable homology is finite-dimensional, and that the maps~\eqref{eq:stablemap} are isomorphisms for $\pu<g$. By inspection of the proof, the argument in~\cite{grhuto} proves this for a more general class of partial toroidal compactifications given by additive collections.

\begin{df}\label{def:rank}
For an admissible collection $\ssigma$, we define the {\em rank} of a cone $\sigma\in\Sigma_g$ to be the minimal $k\ge 0$ such that there exists a cone $\tau\in\Sigma_k=\Sigma_g\cap\Sym^2(\RR^k)$ lying in the $\GL(g,\ZZ)$-orbit of $\sigma$.
\end{df}

As usual we will define the dimension of a cone $\sigma$ as the smallest dimension of a linear subspace containing $\sigma$.

\begin{df}\label{df:small}
An admissible collection $\ssigma$ is called \emph{small} if for any cone $\sigma\in\ssigma$ of rank $\geq 2$, the inequality $\dim\sigma\ge\frac{\rank\sigma}2+1$ holds.
\end{df}
One does not impose any condition on cones of rank $1$ because they are necessarily $1$-dimensional.

\begin{rem}\label{rem:small1}
Let $\ssigma$ be a small admissible collection.
We claim that for given $k\geq 0$ the number of $\GL(g,\ZZ)$-orbits of cones of dimension $k$ depends only on $k$, but not on $g$, provided $g$ is sufficiently large.
Let us assume $g\geq 2k-2$ holds.
Then, by the definition of a small collection, any cone of dimension $k$ has rank at most $2k-2$ and is thus
$\GL(g,\ZZ)$-equivalent to a cone in $\Sigma_{2k-2}$. Hence the number of $\GL(g,\ZZ)$-orbits of cones of dimension $k$ is the same as the number of
$\GL(2k-2,\ZZ)$-orbits of such cones, and this is clearly independent of $g$.

\end{rem}
\begin{rem}\label{rem:small2}
We note that for the only known small additive collection of (full, as opposed to partial) compactifications, that is for the perfect cone compactifications $\Perf$, the stronger property that codimension $k$ strata occur over $\ab[g-k]$ holds --- which then implies that all cones $\sigma$ satisfy $\dim\sigma\geq\rank\sigma$. The proof of stabilization of cohomology of $\Perf$ in close to top degree, given in~\cite{grhuto} used this stronger property in a crucial way; our current setup with small additive collections is more general, and it would be interesting to discover new examples where it may apply.
\end{rem}
The main result of~\cite{grhuto} can be made to work for  any small admissible collection, and in our current setup can be phrased as follows.
\begin{thm}\label{thm:oldmain}
For any small admissible collection $\ssigma$ the equality
$$
  \BM[\topd-k]{\Asigma[\infty]}\cong  \BM[g(g+1)-k]{\Asigma}
$$
holds for all $k<g$. Furthermore, in this range the Borel--Moore homology of $\Asigma$ consists entirely of algebraic classes.
\end{thm}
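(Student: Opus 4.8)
The plan is to compute $\BM[g(g+1)-k]{\Asigma}$ for $k<g$ by running the Gysin stratification spectral sequence for Borel--Moore homology associated to the decomposition of $\Asigma$ into toroidal strata, to show that in codegree $<g$ this spectral sequence is independent of $g$ and degenerates, in that range, to an $E^\infty$-page consisting of algebraic classes, and then to use Proposition~\ref{prop:transversal} to identify the stabilization maps~\eqref{eq:stablemap} with morphisms of these spectral sequences that are isomorphisms in the stable range. This is essentially the argument of~\cite{grhuto}, reorganized so that it is transparently the \emph{smallness} hypothesis, rather than the stronger codimension property of Remark~\ref{rem:small2}, that gets used.

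First I would recall the geometry of the strata. The contraction $\phi_\ssigma\colon\Asigma\to\Sat$ together with $\Sigma_g$ refines $\Sat=\bigsqcup_r\ab[g-r]$ into locally closed toroidal strata $\stratum[g]{\sigma}$, indexed by the $\GL(g,\ZZ)$-orbits of cones $\sigma\in\Sigma_g$; the stratum attached to a cone of rank $r$ lies over $\ab[g-r]$, has complex codimension $\dim\sigma$ in $\Asigma$, and, after passage to a neat level cover and a finite quotient by the refined automorphism group $G_\sigma$, is a torus bundle with $\left(\binom{r+1}{2}-\dim\sigma\right)$-dimensional fibre over the $r$-fold fibre power $\calX_{g-r}^{\times r}$ of the universal abelian variety over $\ab[g-r]$. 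In particular each stratum is a smooth orbifold, so Poincar\'e duality gives $\BM[\topd-k]{\stratum[g]{\sigma}}\cong\coh[k-2\dim\sigma]{\stratum[g]{\sigma}}$, even though $\Asigma$ itself need not be rationally smooth. Filtering $\Asigma$ by closed unions of strata of bounded dimension yields a spectral sequence with $E^1=\bigoplus_{[\sigma]}\BM[\bullet]{\stratum[g]{\sigma}}$ abutting to $\BM[\bullet]{\Asigma}$; since its differentials and the connecting maps of the relevant long exact sequences are all induced by proper pushforwards and by restrictions to open subsets, which preserve the image of the cycle map, both the stabilization and the algebraicity assertions reduce to the same statements for the part of the $E^\infty$-page in total degree $\topd-k$.

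The smallness hypothesis enters next. By the duality above, $\stratum[g]{\sigma}$ contributes to $\BM[\topd-k]{\Asigma}$ only when $\dim\sigma\le k/2$; by Definition~\ref{df:small} such a cone (if of rank $\ge 2$) satisfies $\rank\sigma\le 2\dim\sigma-2\le k-2<g$, and cones of rank $\le 1$ trivially have rank $<g$, so every contributing stratum has rank $<g$. Then, by Definition~\ref{def:admcollection} and Remark~\ref{rem:small1}, the combinatorial type of the stratum --- the orbit $[\sigma]$, its dimension, and the group $G_\sigma$ --- is independent of $g$ for $g$ large. For each such $\sigma$ one would compute the cohomology of $\stratum[g]{\sigma}$ in degrees $\le k-2\dim\sigma$ via the Leray spectral sequence of the torus-and-abelian bundle: since $\rank\sigma\le 2\dim\sigma-2$ and $k<g$ force $k-2\dim\sigma<g-r$, Borel's theorem (together with the known stable behaviour of the cohomology of the universal family $\calX_{g-r}^{\times r}$) makes the base and fibre contributions $g$-independent, with the trivial-coefficient part of the base contribution equal to the fixed algebra $R$; passing to $G_\sigma$-invariants then preserves $g$-independence. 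The algebraic classes here are the cap products of the tautological classes $\lambda_i$ with fundamental classes of (closures of) strata; the only non-algebraic contributions come from the odd cohomology of the torus fibres, and the crux --- exactly as in~\cite{grhuto} --- is that these do not survive to $E^\infty$, so that $\BM[\topd-k]{\Asigma}$ is spanned, for $k<g$, by cap products of the $\lambda_i$ with fundamental classes of strata closures.

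Finally I would match consecutive genera. The product-with-$E$ map $\Asigma\to\Asigma[g+1]$ is a transverse embedding by Proposition~\ref{prop:transversal}, hence carries the Gysin map~\eqref{eq:stablemap}, and it is compatible with the stratifications: it sends the stratum of a rank-$r$ cone of $\Sigma_g$ to the stratum of the same cone regarded in $\Sigma_{g+1}$ via the coordinate embedding of Definition~\ref{def:admcollection}, so it induces on the $E^1$-pages, in codegree $<g$, exactly the identifications obtained above. Hence~\eqref{eq:stablemap} is an isomorphism of spectral sequences in this range, so an isomorphism on $\BM[\topd-k]{\Asigma}$ for $k<g$, and passing to the inverse limit gives the first assertion; algebraicity in the limit follows from algebraicity at finite level. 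I expect the main obstacle to be the cohomological analysis of a single stratum in close-to-top codegree: identifying its fibre structure over the correct level cover, controlling the Leray spectral sequence (including the nontrivial local coefficients coming from the standard symplectic local system), and --- crucially for the present generalization --- verifying that only smallness, and not the stronger property that codimension-$k$ strata occur over $\ab[g-k]$, is needed for all of this. This is where the bulk of the work in~\cite{grhuto} lies.
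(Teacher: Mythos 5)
Your proposal is correct and follows essentially the same route as the paper's proof: the Gysin spectral sequence of the toroidal stratification, smallness to bound the rank of the contributing cones and place each stratum in the stable (even, algebraic) range of \cite[Thm.~8.1]{grhuto} (Theorem~\ref{thm:priorresult}), degeneration at $E^1$ by parity, and the transversality of the product-with-$E$ map to realize the Gysin stabilization maps. The only organizational difference is that the paper groups the strata by rank into the loci $\beta^0_{p,g}$ and carries out the check that both source and target of each differential lie in the stable range explicitly in those coordinates.
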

In fact the results in~\cite{grhuto} are stated in terms of cohomology with compact support, but (see also~\cite[Rem.~9.2]{grhuto}) they can be better rephrased in terms of Borel--Moore homology, so that the concept of algebraic class is well-defined.
\begin{proof}
The proof of Theorem~\ref{thm:oldmain} in~\cite{grhuto} proceeds by first showing that for a given cone $\sigma\in\Sigma_k$, the cohomology of the corresponding stratum $\beta_g(\sigma)\subset\Asigma$ stabilizes as $g$ increases (note that admissibility implies that $\sigma$ is also a cone in $\Sigma_g$ for any $g\ge k$), then showing that this cohomology is purely algebraic and in particular non-zero only in even degree. This implies that the Gysin spectral sequence for the stratification of $\Asigma$ by $\stratum\sigma$ degenerates at $E^1$ in the stable range, which finally shows that Borel--Moore homology in small codegree of $\Asigma$ is the sum of the Borel--Moore homologies in small codegree of $\stratum\sigma$, and thus stabilizes.

For $p\leq g$, let us denote by $\gamma_p$ the minimal dimension of a rank $p$ cone in $\ssigma$. (If no rank $p$ cone exists, we may set $\gamma_p=\infty$.) Then $\gamma_p$ is also equal to the codimension of the union of all strata $\stratum\sigma$ with $\rank\sigma = p$, that is of the locus
$$
  \beta^0_{p,g}:=\bigsqcup_{\substack{[\sigma]\in[\ssigma]\\\rank\sigma=p}}\stratum\sigma,
$$
which is a (possibly reducible) locally closed subset of $\Asigma$.

Recall from~\cite[Rem. 9.4]{grhuto} that the Borel--Moore homology of $\beta^0_{p,g}$ stabilizes in degree $g(g+1)-2\gamma_p-k$ for $k< g-p-1$. For $p=0,1$, however, the strata $\beta^0_{p,g}$ are isomorphic to $\ab$ and the universal Kummer variety $\calX_{g-1}/\pm1$, respectively, so that the stability range is in degree $g(g+1)-k$ for $k<g$ in case $p=0$ (by Borel's stability theorem) and in degree $g(g+1)-2-k$, for $k<g-1$ in case $p=1$ (by~\cite[Prop. 4.3]{grhuto}). In the stable range, the Borel--Moore homology of $\beta^0_{p,g}$ is always algebraic and in particular it vanishes in odd degree.

From now on, we assume that $\ssigma$ is small, so that we have $\gamma_p\leq \frac p2+1$ for all $p\geq 2$, or equivalently $2\gamma_p-p-2\geq 0$.  Let us consider the Gysin spectral sequence associated with the stratification $\{\beta^0_{p,g}\}$. We write it by using the rank $p$ of the stratum and the codegree $q$ as natural parameters, as follows:
$$
   E^1_{-p,-q}=\BM[g(g+1)-p-q]{\beta^0_{p,g}}\Rightarrow \BM[g(g+1)-p-q]{\Asigma}.
$$
In view of the considerations above, the term $E^1_{-p,-q}$ stabilizes in the following cases:
$$\begin{array}{l@{}l}
p=0& \text{ and } q<g,\\
p=1&  \text{ and } q<g,\\
p\geq 2& \text{ and }   p+q<2\gamma_p+g-p-1.
\end{array}            $$
Within this stable range one has $E^\infty_{-p,-q}=E^1_{-p,-q}=\BM[g(g+1)-p-q]{\beta^0_{p,g}}=0$ if $p+q$ is odd.

Next, we study the differential
$$d^r:\; E^r_{-p,-q}\longrightarrow E^r_{-p-r,-q+r-1}.$$
It is easy to see that for $p+q<g$ and all $r\geq 1$, both $E^1_{-p,-q}$ and $E^1_{-p-r,-q+r+1}$ lie in the stable range defined above.
 To prove this, it suffices to check that $E^1_{-p',-q'}$ lies in the stable range for $p'=p+r$ and $q'=q-r+1$. Indeed, in this case we have $p'+q'\leq g$. If $p'=1$ holds, this implies $q< g$ and we are automatically in the stable range. For $p'\geq 2$, smallness implies
$$
p'+q'\leq g \leq g+2\gamma_{p'}-p'-2<2\gamma_{p'}+g-p'-1.
$$

Hence, for $p+q<g$ one has $E^r_{-p,-q}=0$ if $p+q$ is odd and $E^r_{-p-r,-q+r-1}=0$ for $p+q$ even. In both cases, the differential $d^r$ vanishes.
This proves that the Gysin spectral sequence degenerates at $E^1$ in codegree $k=p+q<g$. This implies the claim.
\end{proof}

\begin{rem}\label{rem:small3}
In fact it is possible to weaken the smallness assumptions even further, if one is interested in stabilization in codegree $k$ for $g$ sufficiently large, but not necessarily for $k<g$. For instance, the proof above can be adapted to show that for a fixed integer $k_0$, the Borel--Moore homology of $\Asigma$ stabilizes in codegree $k< k_0$ whenever
$$
  \dim\sigma< \frac{k_0+\rank \sigma-g+1}2
$$
holds for all cones $\sigma\in\ssigma$. It would be interesting to see if there are natural admissible collections of partial compactifications that satisfy this weaker smallness assumption for some (large) integer $k_0$.
\end{rem}
As the goal of the current paper is to provide an algorithm for computing the stable homology, we now recall the description of the cohomology of the stratum $\beta(\sigma)\subset\Asigma$ corresponding to a cone $\sigma\in\Sigma_g$. Recall that the partial toroidal compactification decomposes as
$$
\Asigma = \bigsqcup_{[\sigma]\in \Sigma_g/\GL(g,\ZZ)}\stratum\sigma,
$$
where $\stratum\sigma$ is a locally closed subset of $\Asigma$ of codimension $\dim\sigma$, and the image of $\stratum\sigma$ under the contracting morphism $\phi_\ssigma$ to the Satake compactification is contained in $\ab[g-\rank\sigma]$. Thus the condition of smallness is equivalent to requiring $\codim\,\phi_\ssigma^{-1}(\ab[g-k])\ge 2k-2$ for all $0\leq k\leq g$. Each stratum $\stratum\sigma$ admits an explicit description as a torus bundle over a fiber product of the universal family over $\ab[g-k]$; we refer to the explicit discussion in~\cite[\S8]{grhuto}.
In particular, each $\stratum\sigma$ is rationally smooth, and thus satisfies Poincar\'e duality over $\QQ$. For any additive collection $\ssigma$, the stability maps are compatible with this stratification, i.e.~they act stratum-wise and restrict to maps $\stratum\sigma\rightarrow\stratum[g+1]\sigma$. The cohomology of $\stratum\sigma$ can then be computed in terms of invariants of the automorphism group of $\sigma$, as we now recall.

\begin{df}\label{d:span,auto}
For a cone $\sigma\in\Sigma_g$ in an admissible collection, denote $V_\sigma:=\Span_\QQ(\sigma)$ the $\QQ$-span of $\sigma$. The {\em refined automorphism group} of $\sigma$, denoted $G_\sigma$, is defined to be the image in $\GL(V_\sigma,\QQ)$ of the stabilizer of $\sigma$ in $\GL(g,\ZZ)$.
\end{df}
For a given cone $\sigma\in\Sigma_k$, the stable cohomology of $\stratum\sigma$ has been computed as follows.
\begin{thm}[{\cite[Th~8.1]{grhuto}}]\label{thm:priorresult}
\begin{itemize}
\item[(i)] Let $\sigma\in\Sigma_k$. Then for any genus $g\geq k$ and any degree $i<g-\rank \sigma -1$ the cohomology group $\coh[i]{\stratum[g]\sigma}$ consists only of algebraic classes, and the stability map $\coh[i]{\stratum[g+1]\sigma}\rightarrow \coh[i]{\stratum[g]\sigma}$ is an isomorphism.
\item[(ii)] The stable cohomology algebra $\coh{\stratum[\infty]\sigma}:=\underset{g}{\underleftarrow{\lim}}\;\coh{\stratum\sigma}$ is isomorphic to the free $R$-algebra generated by the $G_\sigma$-invariants of the symmetric algebra of $V_\sigma$, i.e.
$$
  \coh{\stratum[\infty]\sigma} \cong\left(\Sym^\pu V_\sigma\right)^{G_\sigma}\otimes_\QQ R,
$$
where we assign degree $2$ to the generators of $V_\sigma$.
\end{itemize}
\end{thm}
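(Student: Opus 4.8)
The plan is to deduce both parts from the explicit geometric description of the toroidal boundary strata, reducing everything to two spectral‑sequence computations: one for the abelian fibration of a boundary stratum over $\ab$, and one for the torus bundle sitting on top of it. The only external inputs I would use are Borel's stability theorem and the vanishing of the stable cohomology of $\ab$ with coefficients in a non‑trivial irreducible symplectic local system.

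\emph{Reduction to a torus bundle.} First I would set $r:=\rank\sigma$ and move $\sigma$ into $\Sym^2_{\rc}(\RR^r)$. By the local structure theory of toroidal compactifications (see \cite[\S8]{grhuto}, \cite{amrtbook,namikawabook,fachbook}), $\stratum\sigma$ is the quotient, by the stabiliser $\Gamma_\sigma$ of $\sigma$ in $\GL(r,\ZZ)$, of a torus bundle $p\colon E_g(\sigma)\to\calX_{g-r}^{\times r}$; here $\calX_{g-r}^{\times r}$ is the $r$‑fold fibre product over $\ab[g-r]$ of the universal ppav, and the fibre of $p$ is the orbit torus $O_\sigma$ of the toric variety of the fan generated by $\sigma$ inside $\Sym^2(\RR^r)$ --- a torus of dimension $\tfrac{r(r+1)}2-\dim\sigma$ with character lattice $\sigma^\perp\cap\Sym^2(\ZZ^r)^\vee$. (That $\Gamma_\sigma$ is finite is because $\sigma$ has rank $r$, so the sum of the primitive generators of its extreme rays is a positive‑definite form, and $\Gamma_\sigma$ preserves it.) Since we work with rational cohomology, $\coh{\stratum\sigma}\cong\coh{E_g(\sigma)}^{\Gamma_\sigma}$, so it suffices to compute $\coh{E_g(\sigma)}$ with its $\Gamma_\sigma$‑action in the stable range.

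\emph{The base, then the torus bundle, then invariants.} In the Leray spectral sequence of $\calX_{g-r}^{\times r}\to\ab[g-r]$, Borel's theorem (giving $\coh{\ab[n]}=R$ in a range growing with $n$) together with the vanishing of $H^\pu(\ab[n],\VV_\lambda)$ for $\lambda\neq 0$ kills everything but the $\Sp$‑invariant part of the exterior algebra of the fibre cohomology; the first fundamental theorem for $\Sp$ identifies this with the free graded‑commutative --- hence polynomial, the generators being in degree $2$ --- algebra $\Sym^\pu(\Sym^2(\QQ^r)^\vee)$ on the $\tfrac{r(r+1)}2$ ``pairing classes'' $T_{ij}$, all of which are algebraic, and a parity argument forces degeneration, so stably $\coh{\calX_{g-r}^{\times r}}\cong R\otimes_\QQ\Sym^\pu(\Sym^2(\QQ^r)^\vee)$. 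Next I would run the Leray spectral sequence of $p$: its $E_2$‑page is $\coh{\calX_{g-r}^{\times r}}\otimes\bigwedge^\pu H^1(O_\sigma)$, with $H^1(O_\sigma)=\sigma^\perp\cap\Sym^2(\QQ^r)^\vee$, and $d_2$ is the $\coh{\calX_{g-r}^{\times r}}$‑linear derivation sending a character to the first Chern class of its $\CC^*$‑bundle. Unwinding Mumford's construction, those $\CC^*$‑bundles are pull‑backs of the Poincaré bundle along the $r$ tautological sections of $\calX_{g-r}^{\times r}\to\ab[g-r]$, so $d_2$ carries $H^1(O_\sigma)$ isomorphically onto the span of the corresponding linear combinations of the $T_{ij}$ --- linearly independent linear forms, hence a regular sequence in the polynomial ring $\Sym^\pu(\Sym^2(\QQ^r)^\vee)$. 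The associated Koszul spectral sequence therefore degenerates at $E_3$, and since $\sigma^\perp\cap\Sym^2(\QQ^r)^\vee$ is the annihilator of $V_\sigma=\Span_\QQ\sigma$ one obtains, in the stable range,
$$
  \coh{E_g(\sigma)}\;\cong\;R\otimes_\QQ\Sym^\pu(V_\sigma^\vee).
$$
This group is algebraic and concentrated in even degree; comparing the spectral sequences for $g$ and $g+1$ and tracking the stability ranges (the limiting one being the vanishing range of $H^\pu(\ab[n],\VV_\lambda)$) gives part (i) in the asserted range $i<g-\rank\sigma-1$. Finally $\Gamma_\sigma$ acts trivially on $R$ (pulled back from $\ab[g-r]$) and on $\Sym^\pu(V_\sigma^\vee)$ through its image $G_\sigma\subset\GL(V_\sigma)$; since $G_\sigma$ is finite, $V_\sigma$ is self‑dual as a $G_\sigma$‑representation, so $(\Sym^\pu V_\sigma^\vee)^{G_\sigma}\cong(\Sym^\pu V_\sigma)^{G_\sigma}$, which is part (ii).

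I expect the main obstacle to be the identification, inside the torus‑bundle spectral sequence, of the characteristic classes of the orbit torus $O_\sigma$ with the pairing classes $T_{ij}$: this requires carefully unwinding the Mumford/Faltings--Chai description of the toroidal boundary, and it is precisely the step that makes the answer come out as $\Sym^\pu V_\sigma$ rather than $\Sym^\pu$ of its annihilator. A secondary, bookkeeping‑type difficulty is pinning down the exact vanishing range of $H^\pu(\ab[n],\VV_\lambda)$ for $\lambda\neq 0$, which is what ultimately forces the ``$-1$'' in the range of (i). The remaining ingredients --- a regular sequence of linear forms gives a Koszul resolution, parity of degrees kills the differentials in the base, and rational representations of finite groups are self‑dual --- are routine.
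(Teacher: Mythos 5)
Your proposal is correct and follows essentially the same route as the source: the paper itself gives no proof of this statement but simply quotes it from \cite[Thm 8.1]{grhuto}, and your reduction to the torus bundle over $\calX_{g-r}^{\times r}$, the Leray/Koszul computation identifying the transgression with the regular sequence of Poincar\'e--bundle classes spanning $(\Span\sigma)^\perp$, and the final passage to $G_\sigma$-invariants is precisely the argument of \cite[\S\S4--5, 8]{grhuto}. The two points you flag as the remaining work (matching the torus characters with the classes $T_{ij}$ via the Mumford/Faltings--Chai description, and pinning down the Borel vanishing range for nontrivial local systems that produces the ``$-1$'') are indeed exactly where the cited proof spends its effort.
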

The computation of stable Borel-Moore homology of $\Perf$, and more generally for any small additive collection, is then completed by the observation of the degeneracy of the spectral sequence.
\begin{prop}[{\cite{grhuto}}]\label{prop:degenerate}
For codegree $k<g$ the spectral sequence in Borel--Moore homology associated to the stratification of $\Asigma$ into strata $\stratum\sigma$ degenerates at $E^1$. Thus in particular $$\BM[\topd-\pu]{\Asigma[\infty]}\cong\bigoplus_{[\sigma]\in[\ssigma]}\BM[g(g+1)-2\dim\sigma-\pu]{\stratum[\infty]\sigma}.$$
\end{prop}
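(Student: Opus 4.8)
The plan is to run the Gysin spectral sequence in Borel--Moore homology associated with the stratification $\Asigma=\bigsqcup_{[\sigma]}\stratum\sigma$ and to show that it collapses at $E^1$ in codegree $<g$, by the same parity argument used in the proof of Theorem~\ref{thm:oldmain} --- the only change being that one refines the filtration by rank used there to the finer filtration by the codimension $\dim\sigma$ of the cone--orbit strata.

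First I would set up the spectral sequence. Since the closure of $\stratum\sigma$ in $\Asigma$ is a union of strata of dimension $\geq\dim\sigma$, the subsets $F_d:=\bigsqcup_{\dim\sigma\geq d}\stratum\sigma$ form a decreasing filtration of $\Asigma$ by closed subsets with $F_d\smallsetminus F_{d+1}=\bigsqcup_{\dim\sigma=d}\stratum\sigma$. The associated exact couple in Borel--Moore homology produces a spectral sequence whose $E^1$-page, in homological degree $n$ and filtration degree $d$, is $\bigoplus_{[\sigma]\colon\dim\sigma=d}\BM[n]{\stratum\sigma}$, converging to $\BM[n]{\Asigma}$, with differential $d^r$ lowering $n$ by $1$; by Remark~\ref{rem:small1} each graded piece is a \emph{finite} direct sum once $g$ is large.

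Next I would pin down which $E^1$-terms matter for codegree $k<g$ and check that they all sit in a single parity. By Theorem~\ref{thm:priorresult}(i) the cohomology $\coh[i]{\stratum\sigma}$ is stable and purely algebraic --- in particular concentrated in even degree --- for $i<g-\rank\sigma-1$; as $\stratum\sigma$ is rationally smooth of real dimension $g(g+1)-2\dim\sigma$, Poincar\'e duality turns this into the stability and evenness of $\BM[g(g+1)-2\dim\sigma-i]{\stratum\sigma}$ in the same range. Repeating the bookkeeping from the proof of Theorem~\ref{thm:oldmain} --- which uses smallness in the form $\dim\sigma\geq\frac{\rank\sigma}2+1$, together with Borel's theorem and \cite[Prop.~4.3]{grhuto} for the rank $0$ and rank $1$ strata --- one checks that every $E^1$-term feeding into $\BM[g(g+1)-k]{\Asigma}$ with $k<g$, and also the source and target of every differential $d^r$ meeting it, lies in this stable range. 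In that range all such terms are concentrated in homological degrees of the single parity $g(g+1)\bmod 2$, while $d^r$ reverses parity; hence $d^r=0$ and the spectral sequence degenerates at $E^1$ for $k<g$.

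Finally, degeneration expresses $\BM[g(g+1)-k]{\Asigma}$ for $k<g$ as the direct sum of the contributions of the strata, which splits over $\QQ$ since it is finite; because the stabilization maps act stratum-wise (restricting to $\stratum\sigma\to\stratum[g+1]\sigma$, as recalled before Theorem~\ref{thm:priorresult}) and all the groups involved stabilize, passing to the inverse limit over $g$ and recording each summand with the grading shift $2\dim\sigma$ given by the codimension of its stratum yields the stated isomorphism with $\BM[g(g+1)-2\dim\sigma-\pu]{\stratum[\infty]\sigma}$. The one delicate point I expect is the third step: making the ``stable range'' precise for this finer stratification and verifying that it is closed under every $d^r$ --- this is precisely where smallness enters, and in practice one simply imports the inequalities already established in the proof of Theorem~\ref{thm:oldmain}.
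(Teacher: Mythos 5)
Your proposal is correct and follows essentially the same route as the paper: the paper's proof is a one-line citation to Propositions 9.3 and 9.5 of \cite{grhuto}, whose content is exactly the Gysin spectral sequence for the stratification by the $\stratum\sigma$ together with the evenness/algebraicity of the stable Borel--Moore homology of each stratum (via Theorem~\ref{thm:priorresult} and Poincar\'e duality for the rationally smooth strata), killing all differentials by parity. Your bookkeeping of the stable range via smallness is the same as that carried out in the proof of Theorem~\ref{thm:oldmain}, merely transported from the rank filtration to the finer filtration by $\dim\sigma$, so no new idea is needed.
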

\begin{proof}
The claim follows by combining Proposition 9.3 and Lemma 9.5 of \cite{grhuto}.
\end{proof}

The formula above already gives a possible approach to computing the stable Borel--Moore homology of $\Asigma$ in close to top degree. One can compute the stable cohomology $H^\pu(\stratum[\infty]\sigma,\QQ)$ for any cone $\sigma$ using invariant theory; then the stable Borel--Moore homology of $\Asigma$ in small codegree will be given by the sum of these for all $\sigma\in\Asigma$.

At the same time, this is deeply unsatisfactory if $\ssigma$ is additive, especially in the case in which $\Asigma$ is rationally smooth. In this case, by Poincar\'e duality, the stable Borel--Moore homology in small codegree is isomorphic to stable cohomology in small degree, which has a natural ring structure. Moreover, the product maps $\Pr^\ssigma$ induce a coalgebra structure on stable cohomology. This ensures that the stable cohomology of $\Asigma$ is a graded Hopf algebra and thus, by Hopf's theorem~\cite[Theorem 3C.4]{hatcher-book}, it must be a free graded-commutative algebra. By Theorem~\ref{thm:oldmain}, all generators of stable cohomology have even degree, hence $\coh{\Asigma[\infty]}$ is a polynomial algebra. The goal of the present paper is to understand how to obtain the number of generators of this polynomial algebra algorithmically.

\section{Stable classes and group invariants}\label{sec:compute}
We now develop the machinery of working with representations of wreath products $G\wr S_n$ that is necessary to compute the contributions of the individual strata to the stable cohomology. The stable cohomology of the strata $\stratum\sigma$ is computed in terms of the invariants of the action of the refined automorphism group of the cone. We first investigate how such refined automorphism groups can be computed for direct sums of cones.
\begin{df}
A cone $\sigma$ of an additive collection $\ssigma$ is called \emph{reducible} if it is $\GL(g,\ZZ)$-equivalent to the direct sum $\sigma_1\oplus\sigma_2$ of some non-zero cones $\sigma_1\in\Sigma_{g_1}$ and $\sigma_2\in\Sigma_{g_2}$; otherwise it is called {\em irreducible}.
\end{df}
As usual, one proves by induction that every cone $\sigma\in\ssigma$ can be written as a direct sum of irreducible cones, uniquely up  to reordering the summands. We will adopt the notation $[\sigma]\in[\ssigma]$ for the $\GL(g,\ZZ)$-orbit of a cone, and will write such a decomposition into irreducible cones as
\begin{equation}\label{eq:irrdec}
[\sigma]=[\sigma_1^{\oplus m_1}\oplus\dots\oplus\sigma_\ell^{\oplus m_\ell}]
\end{equation}
where the distinct irreducible summands $[\sigma_1],\dots,[\sigma_\ell]\in[\ssigma]$ and their multiplicities $m_i$ are unique up to reordering. We now compute the refined automorphism group of $\sigma$ in terms of the decomposition into a direct sum of irreducible cones. To formulate the answer, we recall that for a group $G$, for any $n\in\ZZ_{>0}$ the wreath product  $G\wr S_n$ is defined as the semidirect product $G\rtimes S_n$, where the symmetric group $S_n$ acts by permuting the factors.

Then standard computations give the following result:
\begin{lm}
If $[\sigma]=[\sigma_1^{\oplus m_1}\oplus\dots\oplus\sigma_\ell^{\oplus m_\ell}]$ is a decomposition of a cone in $\ssigma$ into a direct sum of irreducible cones of $\ssigma$, then the refined automorphism group of $\sigma$ is
$$
G_\sigma\cong (G_{\sigma_1}\wr S_{m_1}) \times\dots \times (G_{\sigma_\ell}\wr S_{m_\ell}).
$$
\end{lm}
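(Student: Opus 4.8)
The plan is to unwind the definition of the refined automorphism group $G_\sigma$ as the image in $\GL(V_\sigma,\QQ)$ of the stabilizer of $\sigma$ in $\GL(g,\ZZ)$, and to analyze how such stabilizing transformations interact with a chosen direct sum decomposition $\sigma=\bigoplus_i \sigma_i^{\oplus m_i}$. Since the statement is about the abstract group $G_\sigma$ only, I may replace $\sigma$ by any convenient representative of its $\GL(g,\ZZ)$-orbit; so I would fix an orthogonal-type decomposition $V_\sigma=\bigoplus_i V_{\sigma_i}^{\oplus m_i}$ (using that $\sigma_i\subset\Sym^2_\rc(\RR^{g_i})$ sits in coordinate blocks, so the spans are independent and the ambient lattice splits compatibly). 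The key observation is that any element of $G_\sigma$ must permute the set of extreme subcones/irreducible summands of $\sigma$, and by uniqueness of the decomposition into irreducibles (stated just above the lemma) it can only send a summand isomorphic to $\sigma_i$ to another summand isomorphic to $\sigma_i$. This already forces the ambient permutation to respect the partition into the $\ell$ isotypic blocks, giving a homomorphism $G_\sigma\to \prod_i S_{m_i}$.

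First I would make precise the claim that an automorphism of the cone permutes irreducible summands: given $\phi\in G_\sigma$, the images $\phi(\sigma_i^{(j)})$ of the irreducible summands form another decomposition of $\phi(\sigma)=\sigma$ into irreducibles, so by the uniqueness statement there is a permutation $\pi\in\prod_i S_{m_i}\subset S_{m_1+\dots+m_\ell}$ with $\phi(\sigma_i^{(j)})=\sigma_i^{(\pi(j))}$ for all $i,j$. (Here I am careful that an irreducible summand cannot break up or merge under a linear automorphism of the whole cone, which is exactly what irreducibility plus uniqueness gives.) Next, composing $\phi$ with the block permutation $\pi^{-1}$ (realized as an honest element of the stabilizer — this is where additivity of the collection is used, since it guarantees the permutation cones lie in $\Sigma_g$), I reduce to the case where $\phi$ preserves each individual summand $\sigma_i^{(j)}$; such a $\phi$ then restricts to an automorphism of each $\sigma_i^{(j)}$, i.e. an element of $G_{\sigma_i}$ acting on the $j$-th copy, and conversely any tuple of such automorphisms together with any block permutation assembles to an element of the stabilizer preserving $\sigma$. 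Unwinding the composition law — block permutation conjugating the tuples of $G_{\sigma_i}$'s — is precisely the multiplication in $\prod_i (G_{\sigma_i}\wr S_{m_i})$, so I get the desired isomorphism.

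There are two routine-but-necessary verifications. One is that the map $G_\sigma\to\prod_i(G_{\sigma_i}\wr S_{m_i})$ just constructed is well-defined on the image in $\GL(V_\sigma,\QQ)$, i.e. that passing from the stabilizer in $\GL(g,\ZZ)$ to its image does not change the group here: this follows because the decomposition data (which summand goes to which, and the induced linear map on each $V_{\sigma_i}$) depends only on the action on $V_\sigma$, and conversely $G_{\sigma_i}$ is itself by definition already the image in $\GL(V_{\sigma_i},\QQ)$. The other is surjectivity: given any tuple $(g_i^{(j)})\in\prod G_{\sigma_i}$ and any $\pi\in\prod S_{m_i}$, lift each $g_i^{(j)}$ to an element of the stabilizer of $\sigma_i$ in $\GL(g_i,\ZZ)$, take the block-diagonal sum with the block permutation matrices, and check this lies in $\GL(g,\ZZ)$ and stabilizes $\sigma$ — again using additivity so that permuting blocks is an allowed lattice automorphism preserving the fan.

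I expect the main subtlety — though not a deep obstacle — to be the very first step: rigorously justifying that a linear automorphism of the full cone $\sigma$ must permute its irreducible direct summands, and in particular cannot mix them by a ``shear'' across blocks. The clean way to see this is to note that the decomposition into irreducibles is intrinsic (it can be recovered from the combinatorial/face structure of $\sigma$, or from the decomposition of the span $V_\sigma$ induced by the indecomposable pieces), hence is preserved by any automorphism, and then invoke the uniqueness statement already established above. Once that is in place, everything else is the standard identification of the automorphism group of a labelled direct sum with a product of wreath products, and I would simply call this ``standard computations'' as the lemma's preamble already does.
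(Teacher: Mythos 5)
The paper offers no proof of this lemma --- it is introduced with ``standard computations give the following result'' --- so there is nothing to compare against line by line; your argument is the standard one and is essentially sound. You correctly identify the crux: that an element of the stabilizer must permute the irreducible summands as actual subcones, which requires the Krull--Schmidt-type statement that the decomposition of $\sigma$ into irreducibles is unique as a set of subcones (not merely as a multiset of $\GL(g,\ZZ)$-orbits); the paper's preceding sentence does assert uniqueness ``up to reordering the summands,'' and your sketch via the intrinsic nature of the decomposition (recoverable from the extreme rays and the face structure, since the face lattice of a direct sum is the product of the face lattices and finite lattices factor uniquely into directly indecomposable factors) is the right way to make this rigorous. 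Two smaller points deserve a remark. First, your appeal to additivity for realizing the block permutation is misplaced: the block permutation is an element of $\GL(g,\ZZ)$ stabilizing $\sigma$ regardless of any property of the fan, once you have chosen a representative in which the $m_i$ copies of $\sigma_i$ occupy identical coordinate blocks; additivity is only needed so that the summands $\sigma_i$ are themselves cones of $\ssigma$ and $G_{\sigma_i}$ is defined. Second, your surjectivity step (lifting each $g_i^{(j)}\in G_{\sigma_i}$ to a block-diagonal element) and the well-definedness of the restriction maps both implicitly use that $G_{\sigma_i}$ does not depend on the ambient rank, i.e.\ that the image in $\GL(V_{\sigma_i},\QQ)$ of the stabilizer of $\sigma_i$ in $\GL(g,\ZZ)$ coincides with that of the stabilizer in $\GL(g_i,\ZZ)$. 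This holds because any $h\in\GL(g,\ZZ)$ stabilizing $\sigma_i$ must preserve the saturated support sublattice of $\sigma_i$, and its action on $V_{\sigma_i}$ factors through the induced lattice automorphism there; it would be worth stating this explicitly, since the paper's Definition~\ref{d:span,auto} is tied to a fixed $g$.
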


This expresses the refined automorphism group of an arbitrary cone in terms of the refined automorphism groups of irreducible cones. The stable cohomology of the corresponding stratum involves computing invariants, which are clearly given as
\begin{equation}\label{eq:symsum}
(\Sym^\pu V_\sigma)^{G_\sigma}
\cong
(\Sym^\pu (V_{\sigma_1}^{\oplus m_1}))^{G_{\sigma_1}\wr S_{m_1}}\otimes \dots \otimes (\Sym^\pu (V_{\sigma_\ell}^{\oplus m_\ell}))^{G_{\sigma_\ell}\wr S_{m_\ell}}.
\end{equation}

We now use these computations to rewrite the stable Borel--Moore homology of $\Asigma[\infty]$, which is the sum over all cones $\sigma\in\ssigma$, as a sum over all irreducible cones only. We express these results in terms of suitable generating series.

For a cone $\sigma\in\Sigma_g$, let  $P_\sigma(t)$ be the generating series for the dimensions of the graded pieces of the stable cohomology of $\stratum\sigma$ as an $R$-module. This is to say, we let
$$
  P_\sigma(t):= \sum_{k=0}^\infty \dim_R\left[\coh{\stratum[\infty]\sigma}\right]_{2k}\; t^k
  = \sum_{k=0}^\infty \dim_\QQ \left(\Sym^\pu V_\sigma\right)^{G_\sigma} t^k,
$$
where $\big[\cdot\big]_\pu$ denotes the graded pieces as an $R$-module.

By Proposition~\ref{prop:degenerate} and Poincar\'e duality for the strata, for any small additive collection $\ssigma$ the stable Borel--Moore homology of $\Asigma$ is simply the sum of the stable cohomology of the individual strata. Thus the generating function of the stable graded pieces as an $R$-module:
$$
  P_\ssigma(t):=\sum_{k=0}^\infty \dim_R\left[\BM[\topd-\pu]{\Asigma[\infty]}\right]_{2k}\; t^k
  = \sum_{[\sigma]\in[\ssigma]}t^{\dim\sigma}P_\sigma(t),
$$
where $\big[\cdot\big]_\pu$ denotes the graded pieces of stable homology of $\Asigma$ as an $R$-module. Then the lemma above and formula~\eqref{eq:symsum} imply
$$
  t^{\dim\sigma}P_{\sigma}(t) = \prod_{j=1}^\ell t^{m_j\dim\sigma_j}P_{\sigma_j^{\oplus m_j}}(t).
$$

We write $\ssigma_{\operatorname{irr}}$ for the collection of irreducible cones in $\ssigma$, and write $[\ssigma]_{\operatorname{irr}}$ for the collection of orbits of irreducible cones. Since every cone $\sigma\in\ssigma$ is a sum of finitely many irreducible cones, we obtain
\begin{equation}\label{eq:infiniteprod}
\begin{aligned}
P_{\ssigma}(t) &= \sum_{\substack{m:\;{[\ssigma]}_{\op{irr}}\rightarrow \NN \\ m(\sigma)=0 \text{ for all but finitely many }[\sigma]}}
\prod_{[\sigma]\in{[\ssigma]}_{\op{irr}}} t^{m(\sigma)\dim\sigma}P_{\sigma^{\oplus m(\sigma)}}(t)\\
&=
\prod_{[\sigma]\in{[\ssigma]}_{\op{irr}}}\left(\sum_{m\geq 0}t^{m\dim\sigma}P_{\sigma^{\oplus m}}(t)\right).
\end{aligned}
\end{equation}
Thus to obtain an algorithm for computing the stable homology of $\Asigma[\infty]$, it remains to understand the factors in the last line. This is really a question in representation theory, and we study it in that generality.
\begin{df}
For a finite group $G$ acting linearly on a finite-dimensional $\QQ$-vector space $V$, we define the formal power series
$$
P_{(G,V)}(t) := \sum_{k=0}^\infty \dim_\QQ (\Sym^kV)^G t^k.
$$
\end{df}
Molien's formula~\cite[Thm.~1.10]{mukai-invariants} allows one to compute this generating series explicitly as
\begin{equation}\label{eq:molien}
P_{(G,V)}(t)=\frac1{\#G } \sum_{A\in G}\frac 1{\det({\bf 1} - tA)}.
\end{equation}
We will now compute $P_{(G\wr S_n,V^{\oplus n})}(t)$ for any $n\in\NN$ in terms of  $P_{(G,V)}(t)$. The wreath product  $G\wr S_n$ acts naturally on $V^{\oplus n}$, whereby $G$ acts on each summand, and $S_n$ permutes the summands.
We then have the following computation (see \cite[Thm 6.4]{dist} for a proof).

\begin{prop}\label{prop:wreath}
 For any finite group $G$ acting on a $\QQ$-vector space $V$ we have the expression
$$
  P_{(G\wr S_n,V^{\oplus n})}(t) = \frac 1{n!} \sum_{\substack{1\leq i_1\dots\leq i_r\\i_1+\dots+i_r=n}} c_{i_1,\dots,i_r}P_{(G,V)}(t^{i_1})\cdot P_{(G,V)}(t^{i_2})\cdot\dots\cdot P_{(G,V)}(t^{i_r})
$$
where $c_{i_1,\dots,i_r}$ is the number of permutations of cycle type $(i_1,\dots,i_r)$ in $S_n$.
\end{prop}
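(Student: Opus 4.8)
The plan is to compute the Molien series of $G \wr S_n$ acting on $V^{\oplus n}$ directly from \eqref{eq:molien}, by organizing the sum over group elements according to their image in $S_n$. Write an element of $G \wr S_n$ as a pair $(g_1,\dots,g_n;\pi)$ with $g_i \in G$ and $\pi \in S_n$; it acts on $V^{\oplus n}$ by sending the $i$-th summand to the $\pi(i)$-th, twisted by $g_i$. First I would fix a permutation $\pi$ of cycle type $(i_1,\dots,i_r)$, and compute $\det(\mathbf{1}-tA)$ for $A=(g_1,\dots,g_n;\pi)$. The key linear-algebra observation is that the matrix $A$ is block-diagonal according to the cycles of $\pi$: on the span of the summands indexed by a single cycle $(j_1\,j_2\,\cdots\,j_i)$ of length $i$, the operator $A$ is a ``companion-type'' block whose $i$-th power along the cycle is conjugate to the single operator $g_{j_1}g_{j_2}\cdots g_{j_i}$ acting on one copy of $V$. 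A standard computation then gives
$$
\det\nolimits_{V^{\oplus i}}(\mathbf{1}-tA|_{\text{cycle}}) = \det\nolimits_V(\mathbf{1}-t^{i}\,g_{j_1}\cdots g_{j_i}),
$$
so that over the whole space $\det(\mathbf{1}-tA) = \prod_{c}\det_V(\mathbf{1}-t^{\ell(c)} g_c)$, the product over cycles $c$ of $\pi$, where $\ell(c)$ is the length and $g_c$ the ordered product of the $g_j$ around $c$.

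Next I would substitute this into Molien's formula:
$$
P_{(G\wr S_n, V^{\oplus n})}(t) = \frac{1}{n!\,(\#G)^n}\sum_{\pi\in S_n}\ \sum_{(g_1,\dots,g_n)\in G^n}\ \prod_{c \text{ cycle of }\pi}\frac{1}{\det_V(\mathbf{1}-t^{\ell(c)}g_c)}.
$$
For fixed $\pi$, the inner sum over $G^n$ factors as a product over the cycles of $\pi$, and for a single cycle of length $i$ the sum over the $i$ group elements attached to it is
$$
\frac{1}{(\#G)^{i}}\sum_{g_{j_1},\dots,g_{j_i}\in G}\frac{1}{\det_V(\mathbf{1}-t^{i}g_{j_1}\cdots g_{j_i})} = \frac{1}{\#G}\sum_{h\in G}\frac{1}{\det_V(\mathbf{1}-t^{i}h)} = P_{(G,V)}(t^{i}),
$$
using that for each value of the product $h = g_{j_1}\cdots g_{j_i}$ there are exactly $(\#G)^{i-1}$ choices of the tuple, together with \eqref{eq:molien}. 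Hence the contribution of $\pi$ depends only on its cycle type $(i_1,\dots,i_r)$ and equals $\prod_{a=1}^{r}P_{(G,V)}(t^{i_a})$; summing over $\pi$ and grouping by cycle type, the number of $\pi$ of type $(i_1,\dots,i_r)$ is the constant $c_{i_1,\dots,i_r}$, which yields the stated formula.

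The main obstacle is the linear-algebra step establishing $\det(\mathbf{1}-tA) = \prod_c \det_V(\mathbf{1}-t^{\ell(c)}g_c)$: one must be careful that conjugating $A$ restricted to a cycle-block into a form where a single factor carries the product $g_{j_1}\cdots g_{j_i}$ is legitimate (it changes the basis but not the characteristic polynomial), and that the cyclic ambiguity in the ordered product $g_c$ does not matter since conjugate elements of $G$ have the same $\det_V(\mathbf{1}-t^{i}(-))$. Everything else is bookkeeping with Molien's formula and the fiber-counting identity for the fibers of the product map $G^{i}\to G$. Since the excerpt cites \cite[Thm 6.4]{dist} for this statement, I would present the argument above as a self-contained derivation and defer to that reference for further details.
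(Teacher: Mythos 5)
Your argument is correct and complete: the block decomposition of $\mathbf{1}-tA$ along the cycles of $\pi$, the identity $\det_{V^{\oplus i}}(\mathbf{1}-tA|_{\text{cycle}})=\det_V(\mathbf{1}-t^{i}g_{j_1}\cdots g_{j_i})$, and the fiber count $(\#G)^{i-1}$ for the product map $G^{i}\to G$ are exactly the right ingredients, and you correctly note that the cyclic ambiguity in $g_c$ only changes it by conjugation. The paper itself gives no proof of Proposition~\ref{prop:wreath} and simply cites \cite[Thm 6.4]{dist}; your derivation is the standard one via Molien's formula and is essentially the argument of that reference, so there is nothing to add.
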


\section{The plethystic substitution and the proof of the main Theorem}\label{sec:proof}
The expression given by  Proposition \ref{prop:wreath} can be stated in terms of the plethystic substitution, the notion of which we now recall (see \cite[\S3.3]{haiman-symmfunct} for more details). Restating it this way will help us state the main theorem in the most compact form, and prove it.
\begin{df}
We denote by
$$
  \Lambda:=\underset{n}{\underleftarrow{\lim}}\;\ZZ[X_1,\dots,X_n]^{S_n}
$$
the ring of symmetric functions in infinitely many variables, and for any $n\in\NN$ call the sum of all degree $n$ monomials
$$
 h_n:=X_1^n+X_1^{n-1}X_2+\dots\in\Lambda
$$
the {\em complete homogeneous polynomial}, and call
$$
 p_n:=X_1^n+X_2^n+\dots\in\Lambda
$$
the {\em power sum}.
\end{df}
It is well-known that the set $\{h_n\}$ freely generates $\Lambda$, while  $\{p_n\}$ freely generates the $\QQ$-algebra $\Lambda\otimes_\ZZ \QQ$. These two sets are related by the formula
$$
  h_n(X)=\frac 1{n!} \sum_{\substack{1\leq i_1\leq\dots\leq i_r\\i_1+\dots+i_r=n}} c_{i_1,\dots,i_r} p_{i_1}(X) p_{i_2}(X)\dots p_{i_r}(X).
$$
\begin{df}
Given a symmetric function $q(X)\in\Lambda$ and a formal power series in one variable with integral coefficients $P(t)\in\ZZ[[t]]$, the plethystic substitution $q[P](t)\in\QQ[[t]]$ is defined as follows. First, since monomials in $p_n$'s form a basis of $\Lambda\otimes_\ZZ\QQ$, one can uniquely write $q$ as their linear combination, denoting the coefficients $\alpha_{i_1,\dots,i_r}$, so that
$$
  q=\sum_{r=1}^{\deg q}\sum_{i_1,\dots,i_r} \alpha_{i_1,\dots,i_r} p_{i_1}\cdot\dots\cdot p_{i_r}.
$$
Then the plethystic substitution is defined as the power series
$$
  q[P](t) := \sum_{r=1}^{\deg q}\sum_{i_1,\dots,i_r} \alpha_{i_1,\dots,i_r} P(t^{i_1}) \cdot \dots \cdot P(t^{i_r}).
$$
\end{df}
Thus Proposition~\ref{prop:wreath} can be restated by saying that $P_{(G\wr S_n,V^{\oplus n})}(t)$ equals the plethystic substitution $h_n[P_{(G,V)}](t)$.

The plethystic substitution can be computed more explicitly as follows.
\begin{lm}\label{lm:plethysm}
Suppose all coefficients of the non-zero power series $0\neq P(t)=c_0+c_1 t +c_2 t^2+\dots$ are non-negative integers $c_i\ge 0$. Then $q[P](t)$ can be obtained from $q(X_1,X_2,\dots,X_i,\dots)$ by substituting  $X_1=\dots = X_{c_0}=1$, $X_{c_0+1 }=\dots = X_{c_0+c_1}=t$, \dots, $X_{c_0+\dots+ c_{j-1}+1}=\dots=X_{c_0+\dots+ c_j}=t^j,\dots$.
\end{lm}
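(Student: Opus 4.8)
The plan is to verify the claim first on the generators $p_n$ of $\Lambda\otimes_\ZZ\QQ$, and then extend to arbitrary $q$ by linearity and multiplicativity of both sides in the monomial-in-$p_n$ basis. First I would observe that the proposed substitution rule --- set $X_i = t^j$ for the $i$ in the block of size $c_j$, i.e.\ for $c_0+\dots+c_{j-1} < i \le c_0+\dots+c_j$ --- is a well-defined ring homomorphism $\Lambda\otimes_\ZZ\QQ \to \QQ[[t]]$ whenever it converges; convergence in $\QQ[[t]]$ is automatic since for each fixed power $t^N$ only finitely many variables ($X_1,\dots,X_M$ with $M = c_0+\dots+c_N$) contribute, as all later variables are specialized to $t^j$ with $j>N$. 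So the map $q \mapsto q(1,\dots,1,t,\dots,t,t^2,\dots)$ is a continuous $\QQ$-algebra homomorphism, and likewise $q \mapsto q[P](t)$ is a $\QQ$-algebra homomorphism by the very definition of plethystic substitution (it is defined to be multiplicative on products of $p_n$'s and $\QQ$-linear). Hence it suffices to check that the two homomorphisms agree on the algebra generators $p_n$.

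The key computation is then: under the substitution, $p_n$ becomes
$$
\sum_{i\ge 1} X_i^n = \underbrace{1 + \dots + 1}_{c_0} + \underbrace{t^n + \dots + t^n}_{c_1} + \underbrace{t^{2n} + \dots + t^{2n}}_{c_2} + \dots = \sum_{j\ge 0} c_j t^{jn} = P(t^n),
$$
which is exactly $p_n[P](t)$ by definition. This settles the generators. I would then spell out the extension step: writing $q = \sum \alpha_{i_1,\dots,i_r} p_{i_1}\cdots p_{i_r}$, both sides send this to $\sum \alpha_{i_1,\dots,i_r} P(t^{i_1})\cdots P(t^{i_r})$ --- the left side because the substitution is a ring homomorphism carrying each $p_{i_k}$ to $P(t^{i_k})$, the right side by the definition of $q[P](t)$. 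Thus the two agree on all of $\Lambda\otimes_\ZZ\QQ$, and in particular on $q$.

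I do not expect a serious obstacle here; the only point requiring a little care is the convergence/well-definedness of the infinite substitution, i.e.\ checking that assigning infinitely many variables the values $1, t, t^2, \dots$ (each with finite multiplicity $c_j$) yields a well-defined element of $\QQ[[t]]$ rather than a divergent sum. This is handled by the observation above that the $c_j$ are finite and the exponents $jn$ grow with $j$, so each coefficient of the resulting power series is a finite sum. One should also note where non-negativity of the $c_i$ is used: it is needed precisely so that "the first $c_0$ variables, the next $c_1$ variables, \dots" makes literal sense as a block decomposition of the variable set; the identity $p_n \mapsto P(t^n)$ itself would formally hold for any integer coefficients, but the clean "substitute these explicit values into $q(X_1,X_2,\dots)$" reformulation requires $c_i \ge 0$.
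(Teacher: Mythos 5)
Your proof is correct and follows essentially the same route as the paper: reduce by linearity to monomials in the $p_n$'s, then by multiplicativity to a single $p_n$, where the substitution visibly yields $\sum_j c_j t^{jn}=P(t^n)=p_n[P](t)$. You additionally spell out the well-definedness of the infinite substitution, which the paper leaves implicit; this is a welcome clarification but not a difference in method.
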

\begin{proof}
Since the plethystic substitution is linear in $q$, and evaluating is also linear in $q$, it is enough to prove the lemma for monomials $p_{i_1}\cdot\dots\cdot p_{i_r}$. For such a monomial the evaluation is the product of evaluations of the $p_{i_j}$, while clearly by definition the plethystic substitution is also the product $(p_{i_1}\cdot\dots\cdot p_{i_r})[P](t)=p_{i_1}[P](t)\cdot\dots\cdot p_{i_r}[P](t)$. Thus it is enough to prove the lemma for the case when $q=p_j$ for some $j$, in which case it is clear.
\end{proof}
\begin{df}
For a formal power series with integral coefficients $P(t)\in\ZZ[[t]]$ we define its exponential to be the plethystic substitution
$$
  (\Exp(P))(t):=\sum_{n\geq 0}h_n[P][t].
$$
\end{df}
Since $h_n$ is equal to the degree $n$ part of the infinite product
$$\prod_{i=1}^\infty (1+X_i^2+\dots+X_i^n),$$
we have the following formal relation:
\begin{equation}\label{eq:series-h_n}
\sum_{n=0}^\infty h_n = \prod_{n=1}^\infty \frac1{1-X_n}.
\end{equation}
Thus, if all coefficients of the non-zero power series $0\neq P(t)=c_0+c_1 t +c_2 t^2+\dots$ are non-negative integers, it follows from Lemma~\ref{lm:plethysm} that
$$
\Exp\left(\sum_{i=0}^{\infty}c_it^i\right) = \prod_{i=1}^\infty\frac1{(1-t^i)^{c_i}}.
$$
Hence,  in this terminology Proposition~\ref{prop:wreath} can be finally restated as follows.

\begin{prop}\label{prop:wrexp}
For any finite group $G$ acting on a $\QQ$-vector space $V$, and for any $\alpha\in\ZZ_{>0}$, the generating series for Betti numbers can be computed as follows:
$$
\sum_{n=0}^\infty t^{ n\alpha}P_{(G\wr S_n,V^{\oplus n})}(t)
=\Exp(t^\alpha P_{(G,V)}(t))=
\prod_{k=1}^\infty \frac1{(1-t^k)^{c_k}},
$$
where the coefficients $c_k$ are defined by $t^\alpha P_{(G,V)}(t)=\sum_{k=1}^\infty c_k t^k $.
\end{prop}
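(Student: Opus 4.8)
The plan is to deduce this directly from Proposition~\ref{prop:wreath} (in its plethystic reformulation $P_{(G\wr S_n,V^{\oplus n})}(t)=h_n[P_{(G,V)}](t)$) together with the definition of $\Exp$, the only genuinely substantive point being that plethystic substitution is compatible with the rescaling that inserts the factor $t^\alpha$.

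First I would establish the identity $t^{n\alpha}\,h_n[P_{(G,V)}](t)=h_n[\,t^\alpha P_{(G,V)}(t)\,](t)$ for every $n\ge 0$. Writing $h_n$ in the power-sum basis as $h_n=\sum_{i_1+\dots+i_r=n}\alpha_{i_1,\dots,i_r}\,p_{i_1}\cdots p_{i_r}$, note that every monomial occurring has the same \emph{weighted} degree $i_1+\dots+i_r=n$. Setting $Q(t):=t^\alpha P_{(G,V)}(t)$, one has $Q(t^{i_j})=(t^{i_j})^\alpha P_{(G,V)}(t^{i_j})=t^{\alpha i_j}P_{(G,V)}(t^{i_j})$, so that for each monomial
$$
(p_{i_1}\cdots p_{i_r})[Q](t)=\prod_{j}Q(t^{i_j})=t^{\alpha(i_1+\dots+i_r)}\prod_{j}P_{(G,V)}(t^{i_j})=t^{n\alpha}\,(p_{i_1}\cdots p_{i_r})[P_{(G,V)}](t);
$$
summing over the monomials of $h_n$ gives the claimed identity, using that plethystic substitution distributes over products of power sums (this is exactly the distributivity exploited in the proof of Lemma~\ref{lm:plethysm}).

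Next I would sum over $n\ge 0$. By the definition of $\Exp$ this yields
$$
\sum_{n=0}^\infty t^{n\alpha}P_{(G\wr S_n,V^{\oplus n})}(t)=\sum_{n=0}^\infty h_n[\,t^\alpha P_{(G,V)}(t)\,](t)=\Exp\!\big(t^\alpha P_{(G,V)}(t)\big)(t),
$$
which is the middle equality. For the last equality, observe that $P_{(G,V)}(t)=\sum_k\dim_\QQ(\Sym^kV)^G\,t^k$ is a nonzero power series with non-negative integer coefficients and constant term $1$ (since $(\Sym^0V)^G=\QQ$); hence $t^\alpha P_{(G,V)}(t)=\sum_{k\ge 1}c_kt^k$ is again a nonzero power series with non-negative integer coefficients, with $c_0=0$ because $\alpha\ge 1$. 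The product formula $\Exp(\sum_k c_kt^k)=\prod_{k\ge 1}(1-t^k)^{-c_k}$, recorded in the text as a consequence of Lemma~\ref{lm:plethysm} and formula~\eqref{eq:series-h_n}, then applies verbatim; the infinite product is a well-defined element of $\QQ[[t]]$ since in each fixed degree only finitely many factors contribute.

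I do not expect a real obstacle here: the statement is a bookkeeping consolidation of results already in hand. The one place to be careful is the weighted-homogeneity argument producing $t^{n\alpha}h_n[P](t)=h_n[t^\alpha P](t)$ — one must check that every term of $h_n$ in the $p_i$-basis has weighted degree exactly $n$ and that the plethysm genuinely multiplies over such products — and the routine verification that the hypotheses of the consequence of Lemma~\ref{lm:plethysm} (integrality, non-negativity, non-vanishing, and $c_0=0$) are satisfied by $t^\alpha P_{(G,V)}(t)$.
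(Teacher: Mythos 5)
Your proof is correct and follows the same route the paper intends: the paper presents Proposition~\ref{prop:wrexp} as a direct restatement of Proposition~\ref{prop:wreath} via the definition of $\Exp$ and the product formula following Lemma~\ref{lm:plethysm}, giving no further argument. Your weighted-homogeneity verification that $t^{n\alpha}h_n[P](t)=h_n[t^\alpha P](t)$, and the check that $t^\alpha P_{(G,V)}(t)$ has non-negative integer coefficients with vanishing constant term, are exactly the details the paper leaves implicit.
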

The arbitrary parameter $\alpha$ serves to give a version of the formula that is general enough for our purposes.
We can now prove our main result.
\begin{proof}[Proof of Theorem~\ref{thm:algorithm}]
Since the statement of the theorem concerns only the structure of $\BM[\topd-\pu]{\Asigma[\infty]}$ as a free graded $R$-module, it suffices to verify that the dimension of $\BM[\topd-k]{\Asigma[\infty]}$ agrees with the dimension of the degree $k$ part of the symmetric algebra of $V_\pu^\ssigma$ for each $k$. Hence it is enough to prove that $P_\ssigma(t)$ coincides with the generating function of the dimension of a polynomial algebra with $\dim_\QQ V_{2k}^\ssigma$ generators in each degree $k$.

Substituting the result of Proposition~\ref{prop:wrexp} in the expression for $P_\ssigma(t)$ given by formula~\eqref{eq:infiniteprod} we obtain
\begin{align*}
P_{\ssigma}(t) &=\prod_{[\sigma]\in{[\ssigma]}_{\op{irr}}}\left(\sum_{m\geq 0}t^{m\dim\sigma}P_{\sigma^{\oplus m}}(t)\right)
=\prod_{[\sigma]\in{[\ssigma]}_{\op{irr}}}\Exp(t^{\dim\sigma}P_{\sigma}(t))\\
&=
\Exp\left(\sum_{[\sigma]\in{[\ssigma]}_{\op{irr}}}t^{\dim\sigma}P_{\sigma}(t)\right)
=\prod_{k=1}^{\infty}\frac1{(1-t^k)^{c_k}},
\end{align*}
where $c_k$ is the coefficient of $t^k$ in $\sum_{[\sigma]\in{[\ssigma]}_{\op{irr}}}t^{\dim\sigma}P_{\sigma}(t)$. By definition of $P_{\sigma}(t)$, the coefficient $c_k$ is equal to the dimension of the direct sum $V_{2k}^\ssigma=\bigoplus_{[\sigma]\in{[\ssigma]}_{\op{irr}}}
(\Sym^{k-\dim\sigma} V_\sigma)^{G_\sigma}$. This concludes the proof.
\end{proof}

\section{Transversality}\label{sec:transverse}
In this section we prove the transversality result for additive collections of fans, in particular  Proposition~\ref{prop:transversal}.
We shall do this in the analytic category, which is sufficient for our purposes.

We have already recalled that, in order to construct a toroidal compactification of $\ab$,
one has to firstly construct partial compactifications in the direction of the cusps, and secondly glue these partial compactifications to obtain a global space. In the case of principal polarization  the cusps correspond to the (up to the action of $\Sp(2g,\ZZ)$ unique) $k$-dimensional isotropic subspaces $U_k \subset \QQ^{2g}$ spanned by the first $k$ elements of the standard basis. We shall denote such a cusp by $F_k$. For any $0<k\le g$ the stabilizer of $U_k$ defines a maximal parabolic subgroup $P_k \subset \Sp(2g,\ZZ)$, which can be described explicitly in terms of generators, see eg.~\cite[\S7]{grhuto}. We denote by $P'_k$ the center of the unipotent radical of $P_k$. The group $P'_k$ is then a lattice of rank $k(k+1)/2$, and taking the partial quotient with respect to $P'_k$ defines a map
\begin{equation}\label{equ:partquot}
  \mathbb H_g \to \mathbb H_{g-k} \times \CC^{k(n-k)} \times (\CC^*)^{k(k+1)/2}.
\end{equation}
Further note that the fan $\Sigma_k$ defines a torus embedding
\begin{equation}\label{equ:torusemb}
  (\CC^*)^{k(k+1)/2} \subset T_{\Sigma_{k}}
\end{equation}
which will eventually provide the partial compactification in the direction of $F_k$.

Dividing by the Jacobi group in the parabolic subgroup $P_k$, see~\cite[\S7]{grhuto}, the partial quotient map (\ref{equ:partquot}) descends to an inclusion
\begin{equation}\label{equ:cuspinc}
\xymatrix@C=66pt
{
\mathcal T_{k,g} \ar[r]\ar[d]
&\mathcal T_{\Sigma_{k,g}}\ar[d]
\\
\ua[g-k]^{\times k}\ar[r]
\ar[d]
&\ua[g-k]^{\times k}\ar[d]
\\
{\ab[g-k]}\ar[r]&{\ab[g-k]}.
}
\end{equation}
Here $\ua[g-k] \to \ab[g-k]$ denotes the universal family and the fiber of $\mathcal T_{\Sigma_{k,g}}\ \to \ua[g-k]^{\times k}$ is the
toric variety $T_{\Sigma_k}$. The variety $\calT_{k,g}$ is the image of $\HH_g$ under  the partial quotient map (\ref{equ:partquot}).
For an admissible collection of fans, the toric variety $T_{\Sigma_{k}}$ only depends on $k$, but not on $g$. Strictly speaking, the above construction should be performed over a level cover of $\ab$ such that the arithmetic group is neat --- which from now on we will tacitly assume to be the case. To obtain the partial compactification, we take the interior of the closure of  $\mathcal T_{k,g} $ in $\mathcal T_{\Sigma_{k,g}}$ and denote this by $\overline{\mathcal T}_{k,g}$. Finally, in order to obtain a neighborhood in $\Asigma$ in the direction of the cusp $F_k$, one has to take the quotient of $\overline{\mathcal T}_{k,g}$ by the group $\GL(g-k,\ZZ)$, see again \cite[\S7]{grhuto}. However, since this group acts freely, provided we have a sufficiently big level structure, we can disregard this group action for the purposes of our proof.

We can also describe the new boundary strata which we have added by this process. These are enumerated by cones $\sigma \in \Sigma_k$ which contain rank $k$ matrices. The strata associated to the  cones of smaller rank already occur in the compactification process for the cusps $F_{k'}$ for some $k' <k$. In order to describe these new strata we first notice that the torus $(\CC^*)^{k(k+1)/2}$ which appears in equations (\ref{equ:partquot}) and (\ref{equ:torusemb})   can be identified as
\begin{equation}
  \TT_k=  \Sym^2(\ZZ^{k}) \otimes \CC^* = (\CC^*)^{k(k+1)/2}
\end{equation}
where $\Sym^2(\ZZ^{k}) \subset \Sym^2(\RR^{k})$ and $\Sigma_k$ is a decomposition of $\Sym^2(\RR^{k})$. A cone $\sigma$ which contains rank $k$ matrices defines a torus
$T(\sigma)= \TT_i / \TT_{\sigma}$,
where
$\TT_{\sigma}= (\Sym^2(\ZZ^{k}) \cap \Span(\sigma))\otimes \CC^*$,
of dimension $k(k+1)/2 - \dim(\sigma)$.
The stratum $\beta(\sigma)$ associated to $\sigma$ is then the double fibration
\begin{equation}\label{equ:fibrestratum}
  \beta_g(\sigma) \to \ua[g-k]^{\times k} \to \ab[g-k],
\end{equation}
where the first fibration $\beta_g(\sigma) \to \ua[g-k]^{\times k}$ is a torus bundle with fiber  $T(\sigma)$.

\medskip

\begin{df}
We say that an embedding $i: X_1 \times X_2 \to X$ of analytic varieties is {\em transverse} if the following holds:
for every point $(x_1,x_2) \in X_1 \times X_2$ there exist neighborhoods $U_1$ and $U_2$ of $x_1$ and $x_2$ in $X_1$ and $X_2$ respectively, as well as a ball $B \subset \CC^d$ and an isomorphism $\varphi: U_1 \times U_2 \times B \to W$ to a neighborhood $W$ of $i(x_1,x_2)\in X$, such that
$$
  \varphi^{-1} \circ i|_{U_1 \times U_2}: U_1 \times U_2 \to U_1 \times U_2 \times B, \, (x_1,x_2) \mapsto (x_1,x_2,0).
$$
\end{df}
The map $(\sigma_1,\sigma_2)\mapsto \sigma_1\oplus\sigma_2$ for an additive collection induces an embedding
\begin{equation}\label{equ:embeddingtorus}
  i_{k,g-k}: T_{\Sigma_k} \times T_{\Sigma_{g-k}} \to T_{\Sigma_g}.
\end{equation}
Indeed this is  a transverse embedding: if
\begin{equation}
\TT_{k,g-k}=\Sym^2(\ZZ^{g}) \otimes \CC^*/(\Sym^2(\ZZ^{k}) \oplus \Sym^2(\ZZ^{g-k})) \otimes \CC^*),
\end{equation}
then the embedding $i_{k,g-k}$ is given by the map
\begin{equation}\label{equ:localtransversal}
i_{k,g-k}: T_{\Sigma_k} \times T_{\Sigma_{g-k}} \to T_{\Sigma_k} \times T_{\Sigma_{g-k}} \times \TT_{k,g-k} \subset T_{\Sigma_g}
\end{equation}
that sends $(x_1,x_2)$ to $(x_1,x_2,1)$.

We are now ready to prove the main result of this section.

\begin{proof}[Proof of Proposition~\ref{prop:transversal}]
By the additivity of the collection of admissible fans and the construction of toroidal compactifications, we clearly have a map
$\Asigma[k] \times\Asigma[g-k]\to\Asigma$. To analyze this map
in more detail, and prove transversality, we consider a neighborhood of a cusp $F_{k'}$ in $\Asigma[k]$ and a neighborhood of a cusp $F_{k''}$ in  $\Asigma[g-k]$. As our claim is local in nature, and since we are on a suitable level cover, it is enough to understand  the diagram
\begin{equation}
\xymatrix@C=66pt
{
\overline{\mathcal T}_{k',k} \times \overline{\mathcal T}_{k'',g-k} \subset {\mathcal T}_{\Sigma_{k',k}}  \times {\mathcal T}_{\Sigma_{k'',g-k}} \ar[r]\ar[d]
&{\overline{\mathcal T}_{k'+k'',g}} \subset {\mathcal T}_{\Sigma_{k' + k'',g}}  \ar[d]
\\
\ua[k-k']^{\times k'} \times \ua[g-k-k'']^{\times k''}\ar[r]
\ar[d]
&{\ua[g-k'-k'']^{\times (k'+k'')}}\ar[d]
\\
{\ab[k-k'] \times \ab[g-k-k'']}\ar[r]&{\ab[g-k'-k'']}.
}
\end{equation}
We  note that this induces maps of strata
\begin{equation*}
\beta(\sigma_1) \times \beta(\sigma_2) \to \beta(\sigma_1 + \sigma_2).
\end{equation*}
Locally the top horizontal arrow is simply the embedding $i_{k,g-k}: T_{\Sigma_k} \times T_{\Sigma_{g-k}} \to T_{\Sigma_g}$ from (\ref{equ:embeddingtorus}),
in particular it is fiberwise transverse by (\ref{equ:localtransversal}). Since all other horizontal maps are also transverse, the claim follows.
\end{proof}

\section{Examples of applications of the algorithm}\label{sec:examples}
In this section we apply the algorithm described above to explicitly compute the stable homology of various partial compactifications.

The simplest additive collections are those generated by just one cone and its direct sums with itself. The simplest cone is $\sigma_1$, the corresponding stratum for which is the boundary of Mumford's partial toroidal compactification, which is a subset of any toroidal compactification. The cones that are direct sums of the form $\sigma_{1+\dots+1}$ correspond to the standard degenerations of abelian varieties and were called {\em standard cones} in~\cite{grhuto}; the resulting partial compactification $\Std$  was called there the {\em standard partial compactification}.

We note that the partial compactification $\Std$ is rationally smooth, and hence by Poincar\'e duality, cap product with the fundamental class defines an isomorphism between cohomology and Borel--Moore homology  $H^k(\Std) \cong \bar H_{\topd -k}(\Std)$  in complementary dimensions. We shall make use of this here and, similarly, for other collections of simplicial cones. In~\cite{grhuto} we computed the stable cohomology of $\Std$, which required significant work. Now this is straightforward.

To apply our machinery, we note that the refined automorphism group $G_{\sigma_1}$ is trivial, and thus $(\Sym^\pu V_{\sigma_1})^{G_{\sigma_1}}$ is simply the polynomial ring $\QQ[T]$, and its Poincar\'e series is thus $P_{\sigma_1}(t)=(1-t)^{-1}$. Thus the generating series for the number of generators of stable cohomology of $\Std$, as an $R$-algebra, is  $1+(1-t)^{-1} t= 1+t+t^2+\dots$. This is to say that the stable cohomology of the partial compactification by standard cones is freely generated, as an $R$-module, by a collection of generators, one in each even degree. In~\cite{grhuto} we identified these generators with the fundamental classes of the strata of $\Std$; however, other choices of generators are possible.

Next, one naturally looks at additive collections that are obtained as direct sums of two cones. It is then natural to consider the only two irreducible cones of rank up to 2, that is $\sigma_1$ and $\sigma_{K_3}$. The graph $K_3$ is the complete graph on three vertices, which can also be thought of as the cyclic graph $C_3$ on 3 vertices. More generally, for any $k\ge 2$ the refined automorphism group $G_{\sigma_{C_k}}$ is the full permutation group $S_k$, which  permutes the rays of the cone. Thus $V_{\sigma_{C_k}}^{G_{\sigma_k}}$ is simply the ring of symmetric functions in $k$ variables. This ring is generated by elementary symmetric functions, which have degrees $1,\dots,k$, so that the corresponding Poincar\'e series is
\begin{equation}\label{PoincareCk}
P_{\sigma_{C_k}}(t)=\prod_{i=1}^k\frac1{1-t^k}.
\end{equation}
Thus the generating function for the number of generators of the stable cohomology of the partial compactification by the cones of the form $\sigma_1^{\oplus a}\oplus\sigma_{K_3}^{\oplus b}$, as an $R$-algebra, is
\begin{multline*}
P_{\{\sigma_1^{\oplus a}\oplus\sigma_{K_3}^{\oplus b}\}}(t)=1+\frac{t}{1-t}+\frac{t^3}{(1-t)(1-t^2)(1-t^3)}=\frac{1-t^2+t^5}{(1-t)(1-t^2)(1-t^3)}
\\
=
1 + t + t^{2}  + 2 t^{3}  + 2 t^{4}  + 3 t^{5}  + 4 t^{6}  + 5 t^{7}  + 6 t^{8}  + 8 t^{9}  + 9 t^{10}   + 11 t^{11}
+13 t^{12}   + 15 t^{13} +\cdots
\end{multline*}

\medskip
We now proceed to deal with the matroidal partial compactification. Recall that the cones of the matroidal fan correspond to simple regular matroids. All such matroids occurring in our range of degrees are graphical; we label them by the corresponding graph, and list the 1-sum-irreducible ones in Table~\ref{t:matrcones7}.

\begin{table}[ht]
  $
\begin{array}{clcc}
\text{cone}&\text{generators}&\text{dim.} & \text{rank}\\
\hline
\\[-2ex]
\sigma_1 & x_1^2
&1&1
\\
\sigma_{K_3} &  x_1^2,x_2^2,(x_1-x_2)^2
&3&2
\\
\sigma_{C_4} & x_1^2,x_2^2,(x_1-x_3)^2,(x_2-x_3)^2
&4&3
\\
\sigma_{K_4-1} &  x_1^2,x_2^2,x_3^2,(x_1-x_3)^2,(x_2-x_3)^2
&5&3
\\
\sigma_{C_5} & x_1^2, x_2^2, (x_1-x_4)^2,(x_2-x_3)^2,(x_3-x_4)^2
&5&4
\\
\sigma_{K_4} & x_1^2,x_2^2,x_3^2,(x_1-x_2)^2,(x_1-x_3)^2,(x_2-x_3)^2
&6&3
\\
\sigma_{C_{222}} &  x_1^2, x_2^2, x_3^2, (x_1-x_4)^2,(x_2-x_4)^2,(x_3-x_4)^2
&6&4
\\
\sigma_{C_{321}} &  x_1^2, x_2^2, x_4^2, (x_1-x_4)^2,(x_2-x_3)^2,(x_3-x_4)^2
&6&4
\\
\sigma_{C_6} &  x_1^2, x_2^2, (x_1-x_5)^2,(x_2-x_3)^2,(x_3-x_4)^2,(x_4-x_5)^2
&6&5\\
\sigma_{C_{2221}} & x_1^2, x_2^2, x_3^2, x_4^2, (x_1-x_4)^2,(x_2-x_4)^2,(x_3-x_4)^2
&7&4\\
\sigma_{K_5-2-1} & x_1^2, x_2^2, x_4^2, (x_1-x_2)^2, (x_1-x_4)^2,(x_2-x_3)^2,(x_3-x_4)^2
&7&4\\
\sigma_{K_5-3} &  x_1^2, x_2^2, x_3^2, (x_1-x_3)^2, (x_1-x_4)^2,(x_2-x_3)^2,(x_3-x_4)^2
&7&4\\
\sigma_{C_{421}} & x_1^2, x_2^2, x_3^2, (x_1-x_5)^2,(x_2-x_3)^2,(x_3-x_4)^2,(x_4-x_5)^2
&7&5\\
\sigma_{C_{331}} &  x_1^2, x_2^2, x_4^2, (x_1-x_5)^2,(x_2-x_3)^2,(x_3-x_4)^2,(x_4-x_5)^2
&7&5\\
\sigma_{C_{322}} & x_1^2,x_2^2,x_5^2,(x_1-x_3)^2,(x_2-x_3)^2,(x_3-x_4)^2,(x_4-x_5)^2
&7&5\\
\sigma_{C_7} & \begin{array}{@{}l} x_1^2, x_2^2, (x_1-x_6)^2,(x_2-x_3)^2,(x_3-x_4)^2,(x_4-x_5)^2, \\(x_5-x_6)^2\end{array}
&7&6\\
\end{array}
$
\caption{\label{t:matrcones7}
Representatives of the equivalence classes of irreducible matroidal cones of dimension up to $7$.}
\end{table}

To apply our machinery, we need to know the refined automorphism group of each of these cones. Formula~\eqref{PoincareCk} gives the Poincar\'e series for the cyclic cones $C_3=K_3$, $C_4,C_5,C_6$. For the other cones, one can use the fact that the refined automorphism group of a matroidal cone is isomorphic to the automorphism group of the corresponding matroid.
An explicit description of the refined automorphism group for cones of dimension at most $6$ can also be extracted from~\cite{grhuto}.
An important feature is that all cones we are working with are basic cones. For this reason, the $G_\sigma$ always acts as a permutation representation on $V_\sigma$. The computations are straightforward but tedious; we thus give one sample detailed computation, and summarize the results for the other cones in a table.

{\em For the cone $\sigma_{K_4-1}$}, it follows from~\cite[\S6.5]{huto2} that $G_{\sigma_{K_4-1}}$ is the stabilizer of $x_3^2$ in the refined automorphism group of $\sigma_{C_4}$. Thus the group $G_{\sigma_{K_4-1}}$ is generated by the three involutions
$$
(x_1^2\leftrightarrow (x_1-x_3)^2), \ (x_2^2\leftrightarrow (x_2-x_3)^2)
\text{ and }
(x_1^2\leftrightarrow x_2^2,(x_1-x_3)^2\leftrightarrow (x_2-x_3)^2)$$
and can be identified with a subgroup of $S_4=G_{\sigma_{C_4}}$ consisting of the identity, two transpositions, three permutations of type $(2,2)$ and two $4$-cycles.

Molien's formula \eqref{eq:molien} then yields
$$
P_{K_4-1}(t)=
\frac {1-t+t^2} {(1-t)^3(1-t^2)(1-t^4)}
=\frac {1-t^6} {(1-t)^2(1-t^2)^2(1-t^3)(1-t^4)}
$$
which is to say that the ring of invariants $(\Sym^\pu V_{\sigma_{K_4-1}})^{G_{\sigma_{K_4-1}}}$ has two generators in degree one, two in degree 2, and one each in degrees $3$ and $4$, with one relation in degree 6.
Alternatively, one can describe $G_{\sigma_{K_4-1}}$ as the wreath product $S_2\wr S_2$ and use Proposition~\ref{prop:wreath} to obtain
$$\psigma{K_4-1}=\frac 1{1-t}h_2\left[\frac 1{(1-t)(1-t^2}\right]=\frac {1-t^6} {(1-t)^2(1-t^2)^2(1-t^3)(1-t^4)}.$$

The results for the other cones are given in Table~\ref{table:Gsigma}. In the notation of the generators of the groups in that table, the index $i$ refers to the generator of the cone $\sigma$ appearing in position $i$ in the description given in Table~\ref{t:matrcones7}.

\begin{table}[ht]
$
\begin{array}{ccc}
\text{cone }\sigma& G_\sigma\subset S_{\dim \sigma}&P_\sigma(t)\\
\hline
\\[-2ex]
\sigma_{K_4}&S_4\cong\langle (24)(35),(123)(456)\rangle \subset S_6&\frac{1+t^3+t^4+t^5+t^6+t^9}{(1-t)(1-t^2)^2(1-t^3)^2(1-t^4)}\\[1ex]
\sigma_{C_{222}}&S_2\wr S_3\subset S_6&\frac{1+t^4+t^5-t^7-t^8-t^{12}}{(1-t)(1-t^2)^2(1-t^3)^2(1-t^4)(1-t^6)}\\[1ex]
\sigma_{C_{2221}}&(S_2\wr S_3)\times S_1\subset S_7& \frac{P_{\sigma_{C_{222}}}(t)}{1-t}\\[1ex]
\sigma_{C_{321}}&S_3\times S_2\times S_1\subset S_6&\frac1{(1-t)^3(1-t^2)^2(1-t^3)}\\[1ex]
\sigma_{K_5-3}&D_8\cong\langle (26),(2567)(34)\rangle\subset S_7&\frac{1-t+2t^2}{(1-t)^4(1-t^2)^2(1-t^4)}\\[1ex]
\sigma_{K_5-2-1}&V_4\times S_2\cong\langle (23)(45),(24)(35),(67)\rangle \subset S_7&\frac{1-t+t^2}{(1-t)^4(1-t^2)^3}\\[1ex]
\sigma_{C_{421}}&S_4\times S_2\times S_1\subset S_7&\frac 1{(1-t)^3(1-t^2)^2(1-t^3)(1-t^4)}\\[1ex]
\sigma_{C_{331}}& (S_3\wr S_2)\times S_1\subset S_7&\frac{1-t+t^2+t^4}{(1-t)^3(1-t^2)(1-t^3)(1-t^4)(1-t^6)}\\[1ex]
\sigma_{C_{322}}&S_3\times G_{\sigma_{K_4-1}}\subset S_7&(1-t)P_{\sigma_{C_3}}(t)P_{\sigma_{K_4-1}}(t)\\[1ex]
\end{array}
$
\caption{\label{table:Gsigma}
Refined automorphism groups and Molien series for irreducible matroidal cones.
}
\end{table}

Combining all of the above data allows us to apply Theorem~\ref{thm:algorithm} to compute the stable cohomology $H^\pu(\Matr[\infty])$ of the matroidal locus in degree up to 16, proving Theorem~\ref{thm:bettimatr} in that range. The results in higher degree are due to Mathieu Dutour Sikiri\'c, who obtained them using the computer algorithm described in the appendix. As noted in the introduction, there is an unfortunate typo in~\cite[Table 1]{grhuto}, which is the source of the discrepancy of the number we obtain now with the numbers in~\cite{grhuto} in degree 12.
\begin{proof}[Proof of Theorem~\ref{thm:bettimatr} for degree up to 16]
Adding up the contributions of irreducible matroidal cones of dimension up to $7$ computed above, we obtain the generating series for the number of generators of stable homology of this partial compactification as an $R$-algebra:
\begin{align*}
  Q(t)&=\!\!\!\sum_{[\sigma]\in[\ssigma^{\op{Matr}}]_{\op{irr}};\ \dim\sigma\leq 7}\!\!\!t^{\dim\sigma}P_\sigma(t) =
  (1- t^{2} + 2t^5+ 5t^{6} + 11t^{7} + 18t^{8} + 35t^{9}\\
  &+ 50t^{10} + 77t^{11} + 102t^{12} + 131t^{13} + 151t^{14} + 173t^{15} + 171t^{16} \\
  &+ 171t^{17} + 151t^{18} + 129t^{19} + 97t^{20} + 74t^{21} + 44t^{22} + 30t^{23}\\
  &+ 14t^{24} + 7t^{25} + 2t^{26} + 2t^{27} -t^{28})
      \cdot\prod_{i=1}^7(1-t^i)^{-1}
      \\
      &=
      1 + t + t^{2} + 2t^{3} + 3t^{4} + 6t^{5} + 13t^{6} + 28t^{7} + 55t^{8} + 113t^{9} + 210t^{10}\\
      &+ 384t^{11} + 663t^{12} + 1109t^{13} + 1776t^{14} + 2778t^{15} + 4196t^{16} + 6209t^{17}\\
      &+ 8958t^{18} + 12691t^{19} + 17621t^{20}+\cdots
\end{align*}
The number of generators of $R$ gives the number of generators of the ring $H^{2k}(\Matr[\infty])$ in each degree $2k\leq 14$.
Moreover, we know that in degree $16$ we have exactly $15$ additional generators, coming from the irreducible $8$-dimensional matroidal cones --- there are two such of rank $4$, eight are of rank $5$, four are of rank $6$ and one is of rank $7$.
This then yields the stable Betti numbers $\dim H^\pu(\Matr[\infty])$ as given in Theorem~\ref{thm:bettimatr}, for degree up to 16.
\end{proof}

\medskip
Finally, we consider the perfect cone compactification, proving Theorem~\ref{thm:bettiperf} for codegree up to 16, by a direct computation. The further computations for codegree up to 22 are due to Mathieu Dutour Sikiri\'c, who obtained them using the computer algorithm described in the appendix. The computer implementation also confirms our manual computations for codegree up to 16 below.
\begin{proof}[Proof of Theorem~\ref{thm:bettiperf} for codegree up to 16]
Up to rank $4$, all cones in the perfect cone decomposition of dimension $\leq 6$ are matroidal. Therefore, they are direct sums of the cones already considered. The number of cones in the perfect cone decomposition of dimension between $5$ and $7$ can be found in
\cite[Fig.~1 \& 2]{numberofperfectforms}.  If one subtracts from these the number of reducible cones and the number of matroidal cones, one obtains that up to isomorphism there exists exactly one irreducible cone of dimension $5$ and rank $5$. In dimension $6$ there exist $2$ orbits of irreducible cones, one of rank $5$ and one of rank $6$. In dimension $7$ there exist $9$ orbits of irreducible cones: two of rank $5$, four of rank $6$ and $3$ if rank $7$.  The explicit form of representatives for these orbits, together with a list of generators for their automorphism groups, was provided to us by Mathieu Dutour~Sikiri\'c (see also \cite{onlinelistcones}).  We list the representatives of these cones in Table~\ref{t:perflist}, where the labeling of the cones is such that the first index indicates the rank, and the second index indicates the dimension.

\begin{table}[ht]
\scalebox{0.90}{
  $\begin{array}{cl}
\text{cone}&\text{generators}\\
\hline
\\[-2ex]
\sigma_{(5,5)} & x_1^2,x_2^2,(x_3+x_4)^2,(x_3+x_5)^2,(x_1+x_2+x_4+x_5)^2
\\
\sigma_{(5,6)} & x_1^2,x_2^2, x_3^2, (x_3+x_4)^2,(x_3+x_5)^2,(x_1+x_2+x_4+x_5)^2
\\
\sigma_{(5,7a)}& x_1^2,x_2^2, x_3^2, x_4^2, (x_3+x_4)^2,(x_3+x_5)^2,(x_1+x_2+x_4+x_5)^2
\\
\sigma_{(5,7b)}& 
(x_2-x_3+x_5)^2,x_1^2,x_5^2,x_2^2,x_3^2,x_4^2,(x_1+x_2+x_4-x_5)^2
%
\\
\sigma_{(6,6)} & x_1^2,x_5^2,x_6^2,(x_3+x_4)^2,(x_2+x_3+x_6)^2,(x_1+x_2+x_4+x_5)^2
\\
\sigma_{(6,7a)}& 
x_1^2,x_5^2,(x_2+x_3)^2,x_4^2,(x_1-x_2+x_3+x_4+x_5)^2,x_6^2,(x_5+x_6)^2
%
\\
\sigma_{(6,7b)}& x_1^2,x_2^2,x_5^2,x_6^2,(x_3+x_4)^2,(x_2+x_3+x_6)^2,(x_1+x_2+x_4+x_5)^2
\\
\sigma_{(6,7c)}& x_1^2,x_3^2, x_4^2, (x_4+x_5)^2,(x_5+x_6)^2,(x_1+x_2+x_5)^2,(x_2+x_3+x_6)^2
\\
\sigma_{(6,7d)}& 
x_1^2,x_4^2,(x_2+x_3)^2,x_5^2,(x_4+x_6)^2,(x_5+x_6)^2,(x_1-x_2+x_3+x_4+x_5)^2
%
\\
\sigma_{(7,7a)}&
(x_3+x_4+x_5)^2,(x_2-x_3)^2,(x_1-x_3-x_7)^2,(x_1+x_2-x_6)^2,x_6^2,(x_4-x_5)^2,x_7^2
%
\\
\sigma_{(7,7b)}&
x_5^2,x_7^2,(x_4-x_6)^2,(x_3-x_6)^2,x_2^2,(x_1-x_5-x_6)^2,(x_1+x_2+x_3+x_4-x_7)^2
%
\\
\sigma_{(7,7c)}& 
(x_1+x_2+x_3+x_4+x_6+x_7)^2,(x_1-x_5)^2,x_2^2,(x_3-x_5)^2,x_4^2,x_6^2,x_7^2
%
\\
\end{array}$
}
  \caption{\label{t:perflist}
    Representatives of irreducible perfect non-matroidal cones of dimension up to $7$}
\end{table}

It turns out that in this range, the refined automorphism group of the cones is very easy to describe, since $G_\sigma$ acts by permuting the extremal rays of the cones. In all cases but one, this action is simply the direct product of symmetric groups acting on subsets of the set of extremal rays in the natural way.
Therefore, the Molien series is the product of Molien series for elementary symmetric functions.
We list the refined automorphism group in Table~\ref{t:listaut}, where
we denote by  $S_{\lbrace i_1,i_2,\dots,i_r\rbrace}$ the symmetric group permuting the generators of the cone $\sigma$ appearing in position $i_1,\dots,i_r$ in the description given in Table~\ref{t:perflist}. The subgroup $G_{\sigma_{K_4-1}}\subset S_{\{2,4,6,7\}}$ in $G_{\sigma_{(6,7d)}}$ is the subgroup generated by $(24)$, $(67)$ and $(27)(46)$.

\begin{table}[ht]
  $\begin{array}{l|l}
    \sigma & G_{\sigma}\\\hline
    \sigma_{(5,5)} & S_5\\
    \sigma_{(5,6)} & S_{\{1,2,4,5,6\}}\times S_{\{3\}}\\
    \sigma_{(5,7a)} & S_{\{1,2,6,7\}}\times S_{\{3,4\}}\times S_{\{5\}}\\
                \sigma_{(5,7b)} & S_{\{2,6,7\}}\times S_{\{3,4\}}\times S_{\{1,5\}}\\
                \sigma_{(6,6)} & S_6\\
     \sigma_{(6,7a)} & S_{\{1,3,4,5\}}\times S_{\{6,7\}}\times S_{\{2\}}\\
  \end{array}
  \ \
  \begin{array}{l|l}
    \sigma & G_{\sigma}\\\hline
     \sigma_{(6,7b)} & S_{\{1,3,4,5,6,7\}}\times S_{\{2\}}\\
        \sigma_{(6,7c)} & S_{\{1,2,4,6,7\}}\times S_{\{3,5\}}\\
        \sigma_{(6,7d)} & G_{\sigma_{K_4-1}}\times S_{\{1,3,5\}}\\
     \sigma_{(7,7a)} & S_{\{2,3,4,5,7\}}\times S_{\{1,6\}}\\
             \sigma_{(7,7b)} & S_7\\
                \sigma_{(7,7c)} & S_7\\
  \end{array}$
  \caption{\label{t:listaut} Description of the refined automorphism groups of non-matroidal cones of dimension up to $7$.
    }
  \end{table}
Let us define the admissible collection $\ssigma$ by
$$
\sigma \in \ssigma \Leftrightarrow \sigma \text{ is the direct sum of perfect cone cones of dimension }\leq 7.
$$

Then Theorem~\ref{thm:algorithm} applied to the small additive collection $\ssigma$ of simplicial cones gives that $\coh{\Asigma[\infty]}$ is a free $R$-algebra generated by some algebraic classes. Let us denote by $c_k$ the number of generators in degree $2k$ for $k\geq 1$.
The generating function for the $c_k$ is then given by
\begin{align*}
  \sum_{k\geq 0}c_kt^k &= Q(t) + \sum_{\sigma\in \text{table~\ref{t:listaut}}}t^{\dim\sigma}P_\sigma(t)
\\
&
=
(1 - t^{2} + 3t^{5} + 7t^{6} + 21t^{7} + 29t^{8} + 57t^{9} + 80t^{10} + 122t^{11} + 155t^{12}
\\
&
 + 195t^{13} + 215t^{14} + 241t^{15} + 229t^{16} + 223t^{17} + 188t^{18} + 157t^{19} + 113t^{20}
\\
&
 + 84t^{21} + 47t^{22} + 32t^{23} + 14t^{24} + 7t^{25} + 2t^{26} + 2t^{27} - t^{28})
\cdot\prod_{i=1}^7(1-t^i)^{-1}
\\
&
=
1 + t + t^{2} + 2t^{3} + 3t^{4} + 7t^{5} + 16t^{6} + 42t^{7} + 83t^{8} + 177t^{9} + 331t^{10}
\\
&
 + 611t^{11} + 1049t^{12} + 1754t^{13} + 2790t^{14} + 4343t^{15} + 6518t^{16}
\\
&
 + 9596t^{17} + 13759t^{18} + 19400t^{19} + 26792t^{20} +\cdots
\end{align*}
where the sum in the first line goes over all the cones $\sigma$ listed in Table~\ref{t:listaut}.
In particular, if one takes the $\lambda$-classes into account, one obtains
the generating function
$\Exp\left(t/(1-t^2)+\sum_{k\geq 0}c_kt^k\right)$
for the stable Betti numbers of $\Asigma$.

As $\ssigma$ is contained in the perfect cone decomposition, in view of Theorem~\ref{thm:algorithm} the stable Betti numbers of $\Asigma$ give a lower bound for those of $\Perf$.
Moreover, since all cones in the perfect cone decomposition of dimension $\leq 7$ belong to $\ssigma$, the stable Betti numbers of $\Asigma$ and those of $\Perf$ agree in codegree $k\leq 14$. From this we can obtain the Betti number of $\Perf$ in codegree $16$ by recalling from \cite{numberofperfectforms} and \cite{MinkowskianLower8} that there are exactly $53$ irreducible cones of dimension $8$ and rank $\leq 7$ in the perfect cone decomposition: $2$ of rank $4$, $11$ of rank $5$, $16$ of rank $6$, $17$ of rank $7$ and $7$ of rank $8$.
\end{proof}

\section*{Acknowledgements}
We would like to thank Mathieu Dutour Sikiri\'c for his help with the classification of cones of the perfect cone decomposition.

\appendix
\vfil\eject

\section*{Appendix. Computations\\by Mathieu Dutour Sikiri\'c} \label{secappMDS}
The computations of this paper depend on the enumerations of irreducible cones of
fixed dimension $d$ and rank $r$. For the matroidal locus the irreducible cones correspond
to the connected simple loopless regular matroids and those are enumerated
in~\cite{IrreducibleRegularMatroids} up to dimension $15$ and the data is available
from~\cite{onlinelistconesMatroid}.

The enumeration of cones of the perfect cone tessellation is much harder since
there is no translation to pure combinatorics. Table~\ref{NrOrbitCones_Perfect}
gives the number of orbits of cones up to dimension $11$, where $g$ denotes what is called rank in the paper.

Enumeration of the case $d=r$ for $d \leq 8$, respectively $d=9$, is done
in~\cite{MinkowskianLower8}, respectively \cite{Minkowskian9}.
Enumeration of cases $d\leq 6$, respectively $d=7$, is done in~\cite{PerfectFormKtheory56},
respectively~\cite{numberofperfectforms}.
The cases $(r,d)$ = $(8,9)$, $(8,10)$, $(9,10)$ are treated in~\cite{DutourHulekSchuermann}.
The cases $(r,d)$ = $(8,11)$, $(9,11)$, $(10,10)$ and $(10,11)$ have been treated
by the author of this Appendix and the methodology will be published separately.
For the case $(r,d)=(11,11)$ we have only a conjectural list of orbits,
which may be incomplete though this is unlikely.
The complete data sets are available on~\cite{onlinelistcones}.

\begin{table}[!ht]
$
\begin{array}{|c||c|c|c|c|c|c|c|c|}
\hline
&&&&&&&&\\[-2ex]
r\ \backslash d
          & 4 & 5 & 6 & 7  & 8  & 9   & 10   &11\\
          &&&&&&&&\\[-2ex]
\hline
\hline
4         & 1 & 3 & 4 & 4  & 2  & 2   & 2    &-\\
\hline
5         &   & 2 & 5 & 10 & 16 & 23  & 25   &23\\
\hline
6         &   &   & 3 & 10 & 28 & 71  & 162  &329\\
\hline
7         &   &   &   & 6  & 28 & 115 & 467  &1882\\
\hline
8         &   &   &   &    & 13 & 106 & 783  &6167\\
\hline
9         &   &   &   &    &    & 44  & 759  &13437\\
\hline
10        &   &   &   &    &    &     & 283  &16062\\
\hline
11        &   &   &   &    &    &     &      &6674\\
\hline
\end{array}
$
\caption{Number of orbits of cones in the perfect cone decomposition for rank $r\leq 11$ and dimension at most $11$}
\label{NrOrbitCones_Perfect}
\end{table}

We now turn to the algorithms for computing $P_{\sigma}(t)$ for a cone $\sigma$.
The stabilizer of $\sigma$ in $\GL(r,\ZZ)$ is obtained by applying the algorithms
of~\cite{MethodComputGroups}. If $\sigma$ is a face of the matroidal locus then~\cite[Theorem 2]{MethodComputGroups} gives the stabilizers since the unimodularity
of the configuration of vectors of matroidal cones implies that they span $\ZZ^{r}$
and so their stabilizer in $\GL(r,\QQ)$ coincides with their stabilizer in $\GL(r,\ZZ)$.
For a face of the perfect cone tessellation we use the method ``Adding elements to
${\mathcal C}$'' of~\cite[Section 3]{MethodComputGroups} in order to get the group.
That is if $\sigma= \{v_1, \dots, v_m\}$ then we take the matrix $A_{\sigma} = \sum_i v_i v_i^T$.
Then we consider a $\ZZ^{r}$ spanning set of short vectors of $A_{\sigma}^{-1}$. For
example we can take all the vectors lower than the maximum coefficient of $A_{\sigma}^{-1}$.

Once we know the stabilizer, the group $G_{\sigma}$ can be readily computed. But
the group may be large and thus the sum expressing $P_{\sigma}$ impossible to compute
directly. Fortunately the term $\det({\bf 1} - tA)$ is invariant under conjugation.
Therefore we use the enumeration of conjugacy classes of elements of $G_{\sigma}$ in order
to reduce the sum to a more manageable expression. The group computations are done in~\cite{gap}
and the computation with fractions are done in~\cite{pari}.

Finally, we note that the codegree 22 computation on $\Perf$ is so far only conjectural, as the
classification of configurations of vectors of dimension 11 and rank 11 is not finished. In fact
a search for such classifications proceeds by increasing the determinant. The highest known
determinant for such a configuration is 32, and it is known, by a computer search, that there
are no further configurations up to determinant 39 --- but it is only conjectural that the known
list of such configurations of vectors is complete.

\newcommand{\etalchar}[1]{$^{#1}$}

\end{document}